\RequirePackage{fix-cm}
\documentclass[11pt, a4paper, oneside, DIV=11, pagebackref]{amsart}
\usepackage[utf8]{inputenc}
\usepackage[T1]{fontenc}
\usepackage{amssymb}
\usepackage{mathtools}
\usepackage[headinclude=true]{typearea}
\usepackage[hidelinks, linktoc=all]{hyperref}
\usepackage[type={CC}, modifier={by}, version={4.0}]{doclicense}
\usepackage[stretch=10]{microtype}

\usepackage{tikz}
\usetikzlibrary{decorations.pathreplacing,arrows.meta}

\providecommand{\href}[2]{#2}

\providecommand*{\backref}{}
\providecommand*{\backrefalt}{}
\renewcommand*{\backref}[1]{}
\renewcommand*{\backrefalt}[4]{%
	\ifcase #1 %
	\or
	  Cited page~#2.
	\else
	  Cited pages~#2.
	\fi
}

\newcommand\MTkillspecial[1]{
  \bgroup
  \catcode`\&=9
  \let\\\relax%
  \scantokens{#1}%
  \egroup
}
\newcommand\DeclarePairedDelimiterMultiline[3]{
  \DeclarePairedDelimiter{#1}{#2}{#3}
  \reDeclarePairedDelimiterInnerWrapper{#1}{star}{
    \mathopen{##1\vphantom{\MTkillspecial{##2}}\kern-\nulldelimiterspace\right.}
    ##2
    \mathclose{\left.\kern-\nulldelimiterspace\vphantom{\MTkillspecial{##2}}##3}}
}

\newcommand{\Z}{\mathbb{Z}}
\newcommand{\N}{\mathbb{N}}
\newcommand{\R}{\mathbb{R}}

\newcommand{\dd}{\mathop{}\!\mathrm{d}}
\DeclarePairedDelimiterMultiline{\abs}{\lvert}{\rvert}
\DeclarePairedDelimiterMultiline{\norm}{\lVert}{\rVert}
\DeclarePairedDelimiterMultiline{\pare}{(}{)}
\newcommand{\st}{\::\:}

\DeclareMathOperator{\supp}{supp}

\DeclareMathOperator{\id}{id}
\DeclareMathOperator{\Leb}{Leb}

\newcommand{\coloneqq}{\mathrel{\mathop:}=}

\renewcommand{\epsilon}{\varepsilon}

\renewcommand{\phi}{\varphi}
\renewcommand{\leq}{\leqslant}
\renewcommand{\geq}{\geqslant}

\newtheorem{thm}{Theorem}[section]
\newtheorem{prop}[thm]{Proposition}
\newtheorem{definition}[thm]{Definition}
\newtheorem{lem}[thm]{Lemma}

\newtheorem*{prop*}{Proposition}

\theoremstyle{definition}

\newtheorem{rmk}[thm]{Remark}

\numberwithin{equation}{section}

\title[a.e.~pair of iets does not generate a free group]{almost every pair of interval exchange transformations does not
generate a free group}
\author{Sébastien Gouëzel}

\address{IRMAR, CNRS UMR 6625,
Université de Rennes, 35042 Rennes, France}
\email{sebastien.gouezel@univ-rennes.fr}

\date{\today}

\begin{document}

\begin{abstract}
We show that the Lebesgue measure of the set of pairs of interval exchange
transformations that generate a free group of rank $2$ is zero.
\end{abstract}

\maketitle

\section{Introduction}

While individual interval exchange transformations have been studied at depth
and are now pretty well understood~\cite{veech_metric_1, viana_iet,
yoccoz_iet}, the global structure of the group of interval exchange
transformations is much more mysterious. Some authors wonder if it could be
amenable (see~\cite{juschenko_mattebon_monod_delasalle_iet, garban_iet}), but
it is not even known whether it contains a free group. This is a famous
question of Katok. Several partial results are known showing that there is no
free group in several
situations.~\cite{juschenko_mattebon_monod_delasalle_iet} shows that there is
no free group when the translation vectors in $\R/\Z$ generate a group of
rank at most $2$, and~\cite{dahmani_fujiwara_guirardel_iet_free, jay_iet}
show that topologically generic pairs of elements do not generate a free
group. See also~\cite{dahmani_fujiwara_guirardel_solvable_iet,
guelman_liousse}. The difficulty is that the orbit of a single point in the
interval can grow at most polynomially as translations commute, but many
orbits can exhibit different behaviors giving rise to faster global growth.
For instance, there is a free semigroup of interval exchange transformations,
by~\cite[Theorem~8.1]{dahmani_fujiwara_guirardel_iet_free}

The open set of pairs of interval exchange transformations produced
by~\cite{dahmani_fujiwara_guirardel_iet_free, jay_iet} in which there is a
relation is obtained by very small perturbations around rational ones. In
particular, one expects them to have very small Lebesgue measure. Therefore,
in view of the dichotomy between topological genericity and measure
genericity that appears throughout dynamical systems, a natural strategy to
construct free groups inside interval exchange transformations would be to
show that this phenomenon has positive Lebesgue measure. Our goal in this
paper is to show that this is not the case: almost every pair of interval
exchange transformations does not generate a free group. Of course, this does
not exclude the existence of such an example, but it would have to be
non-typical both from the measure and topological points of view.

To state the theorem more precisely, let us introduce a few notations.
Consider $d \in \N^*$ and $\pi$ a permutation of $\{1, \dotsc, d\}$ and
scalars $(\lambda_1, \dotsc, \lambda_d) \in (0,1)^d$ with $\sum \lambda_i =
1$. Then one defines the associated interval exchange transformation
$f=f_{\pi, \lambda}$ from $[0,1)$ to itself as follows: partition $[0,1)$
into consecutive intervals $I_1,\dotsc, I_d$ of length $\lambda_i$, and then
swap them according to $\pi$. By convention, all our intervals are closed on
the left and open on the right, i.e., of the form $[a, b)$. We will sometimes
write $I_i = [x_{i-1}, x_i)$, i.e., the $i$-th singularity $x_i=\sum_{j< i+1}
\lambda_j$ is the left endpoint of $I_{i+1}$. See Figure~\ref{fig:iet} for an
example. When $d$ and $\pi$ are fixed, denote by $\Delta_\pi$ the set of
corresponding $(\lambda_1, \dotsc, \lambda_d)$. It is a $(d-1)$-dimensional
open simplex as $\lambda_d = 1 -\sum_{i<d} \lambda_i$, and it has a canonical
Lebesgue measure class (given for instance by $\dd\lambda_1 \wedge \dotsb
\wedge \dd\lambda_{d-1}$).

\begin{figure}
\centering
\begin{tikzpicture}[
  x=2cm,y=1cm,
  interval/.style={line width=2.5pt},
  map/.style={-{Stealth[length=2mm]},thick},
  every node/.style={font=\normalsize}
]

\coordinate (t0) at (0,0);
\coordinate (t1) at (1.35,0);
\coordinate (t2) at (2.55,0);
\coordinate (t3) at (4.65,0);
\coordinate (t4) at (5.65,0);
\coordinate (t5) at (7.0,0);

\coordinate (b0) at (0,-4.2);
\coordinate (b1) at (1.35,-4.2);
\coordinate (b2) at (2.35,-4.2);
\coordinate (b3) at (4.45,-4.2);
\coordinate (b4) at (5.65,-4.2);
\coordinate (b5) at (7.0,-4.2);

\draw[thin] (t0) -- (t5);
\draw[thin] (b0) -- (b5);

\draw[interval,blue!75!black]   (t0) -- (t1);
\draw[interval,green!50!black]  (t1) -- (t2);
\draw[interval,gray!50]         (t2) -- (t3);
\draw[interval,violet!80!black] (t3) -- (t4);
\draw[interval,red!75!black]    (t4) -- (t5);

\draw[interval,red!75!black]    (b0) -- (b1);
\draw[interval,violet!80!black] (b1) -- (b2);
\draw[interval,gray!50]         (b2) -- (b3);
\draw[interval,green!50!black]  (b3) -- (b4);
\draw[interval,blue!75!black]   (b4) -- (b5);

\foreach \p in {0,...,5} {
  \fill (t\p) circle (2pt);
  \fill (b\p) circle (2pt);
}

\node[below=3pt] at (t0) {$x_0$};
\node[below=3pt] at (t1) {$x_1$};
\node[below=3pt] at (t2) {$x_2$};
\node[below=3pt] at (t3) {$x_{d-2}$};
\node[below=3pt] at (t4) {$x_{d-1}$};
\node[below=3pt] at (t5) {$1$};

\draw[decorate,decoration={brace,amplitude=5pt}]
  (0,0.15) -- (1.35,0.15)
  node[midway,above=6pt] {$I_1$};
\draw[decorate,decoration={brace,amplitude=5pt}]
  (1.35,0.15) -- (2.55,0.15)
  node[midway,above=6pt] {$I_2$};
\node at (3.5,0.48) {$\cdots$};
\draw[decorate,decoration={brace,amplitude=5pt}]
  (4.65,0.15) -- (5.65,0.15)
  node[midway,above=6pt] {$I_{d-1}$};
\draw[decorate,decoration={brace,amplitude=5pt}]
  (5.65,0.15) -- (7.0,0.15)
  node[midway,above=6pt] {$I_d$};

\node[above=3pt] at (b0) {$f(x_{d-1})$};
\node[above=3pt] at (b1) {$f(x_{d-2})$};
\node[above=3pt] at (b3) {$f(x_1)$};
\node[above=3pt] at (b4) {$f(x_0)$};
\node[above=3pt] at (b5) {$1$};

\draw[decorate,decoration={brace,amplitude=5pt,mirror}]
  (0,-4.35) -- (1.35,-4.35)
  node[midway,below=6pt] {$f(I_d)$};
\draw[decorate,decoration={brace,amplitude=5pt,mirror}]
  (1.35,-4.35) -- (2.35,-4.35)
  node[midway,below=6pt] {$f(I_{d-1})$};
\node at (3.5,-4.67) {$\cdots$};
\draw[decorate,decoration={brace,amplitude=5pt,mirror}]
  (4.45,-4.35) -- (5.65,-4.35)
  node[midway,below=6pt] {$f(I_2)$};
\draw[decorate,decoration={brace,amplitude=5pt,mirror}]
  (5.65,-4.35) -- (7.0,-4.35)
  node[midway,below=6pt] {$f(I_1)$};

\draw[map,blue!75!black]
  (0.675,-0.08)
  .. controls (0.675,-2.0) and (6.325,-2.0) ..
  (6.325,-4.08);

\draw[map,green!50!black]
  (1.95,-0.08)
  .. controls (1.95,-1.5) and (5.05,-2.8) ..
  (5.05,-4.08);

\draw[map,violet!80!black]
  (5.15,-0.08)
  .. controls (5.15,-2.8) and (1.85,-1.5) ..
  (1.85,-4.08);

\draw[map,red!75!black]
  (6.325,-0.08)
  .. controls (6.325,-2.0) and (0.675,-2.0) ..
  (0.675,-4.08);
\end{tikzpicture}
\caption{An interval exchange transformation for the order-reversing permutation $\pi:(1,\dotsc, d) \mapsto (d, \dotsc, 1)$}
\label{fig:iet}
\end{figure}

\begin{thm}
\label{main_thm} Let $d\in \N^*$ and $\pi$ a permutation of $\{1, \dotsc,
d\}$. Let $e\in \N^*$ and $\sigma$ a permutation of $\{1, \dotsc, e\}$. Then
there is a full Lebesgue measure subset $\Omega$ of $\Delta_\pi \times
\Delta_{\sigma}$ such that, for $(\lambda, \mu)\in \Omega$, the two
transformations $f_{\pi, \lambda}$ and $f_{\sigma, \mu}$ do not generate a
free group: there is a nontrivial word $w = w(x,y)$ such that $w(f_{\pi,
\lambda}, f_{\sigma, \mu}) = \id$.
\end{thm}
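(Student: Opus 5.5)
The plan is to produce, for almost every pair, an explicit commutator relation $[a, h a h^{-1}] = \id$, where $a$ is a power of $f = f_{\pi,\lambda}$ and $h$ is a power of $g = f_{\sigma,\mu}$. Two elements with disjoint supports commute, and $h a h^{-1}$ has support $h(\supp a)$. So it suffices to find $q, r \neq 0$ such that
\[
  \supp(f^q) \cap g^r(\supp f^q) = \emptyset .
\]
The word $[x^q, y^r x^q y^{-r}]$ is nontrivial in the free group on $x,y$ when $q,r\neq 0$, so this gives the theorem. Since every pair in question is of the form $(f_{\pi,\lambda}, f_{\sigma,\mu})$, I may use Fubini: first fix $\lambda$ in a full measure set depending only on $f$, then show that for almost every $\mu$ a suitable $r$ exists.

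\textbf{Step 1 (rigidity with controlled support).} For almost every $\lambda$, I would use Rauzy--Veech induction (Veech's recurrence, or the rigidity of a.e.\ interval exchange transformation) to find times $q_k\to\infty$ for which $f^{q_k}$ is the identity outside a set $S_k$. This set $S_k$ should be a union of at most $C=C(d)$ intervals, of total length $\epsilon_k\to 0$. The natural candidate for $S_k$ is a neighbourhood of the tops and bottoms of the Rauzy towers, at a time when the induction matrix is close to a fixed positive matrix and one tower has almost full measure. The number of intervals must be bounded independently of $k$, and not merely $O(q_k)$. I would get this by looking only at times when the induced map is a return to a short base interval and the towers are "balanced", i.e.\ along a recurrent subsequence of the Rauzy--Veech cocycle given by Masur--Veech ergodicity. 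Alternatively, one could use Boshernitzan-type criteria.

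\textbf{Step 2 (moving the small set off itself with $g$).} Fix such a $\lambda$ and write $S_k$ as a union of intervals $J_1,\dots,J_C$ of lengths at most $\epsilon_k$. For $g^r(S_k)\cap S_k=\emptyset$ it suffices that $g^r$ is continuous on each $J_i$, which costs a union bound over the at most $\abs{r}(e-1)$ discontinuities of $g^r$. It also requires that $g^r(J_i)$ avoids an $\epsilon_k$-neighbourhood of the left endpoints of all the $J_j$. For fixed $r$ (say $r=1$ or a bounded range of $r$), each of these is an affine condition on $\mu$: the translation amounts of $g^r$ on a given branch are integer combinations of the $\mu_i$ with coefficients determined by the combinatorics. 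Failing it confines $\mu$ to a slab of width $O(\epsilon_k)$, and there are $O(C^2 r)$ such slabs. Hence the set of bad $\mu$ for a given $k$ has measure $O(C^2\epsilon_k)$, uniformly. Since $\epsilon_k\to 0$, the set of $\mu$ that are bad for every $k$ has measure zero. For all remaining $\mu$ some $k$ works, which yields the relation.

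The hard part will be Step 1. Rigidity alone only gives $\Leb(S_k)\to 0$, and the union bound in Step 2 needs the number of intervals of $S_k$ to stay bounded, or at least to grow much more slowly than $1/\epsilon_k$. There is also a second issue: a measure-zero set of exceptional $\mu$ must be discarded for each $k$. This needs the relevant affine forms to be non-degenerate, i.e.\ some $g^r$ must have a translation vector that is a nonconstant function of $\mu$ on the branches meeting $S_k$. I would handle this by allowing $r$ to range over $\{1,\dots,R\}$, and choosing $R$ using the fact that an interval exchange transformation with irrational data has no branch of $g^r$ translating by a $\mu$-independent amount except the identity.
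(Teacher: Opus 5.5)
\textbf{Step~1 fails.} The problem is more basic than the component count you worry about. For almost every $\lambda$ (with $\pi$ irreducible and $d\ge 2$), $f=f_{\pi,\lambda}$ is uniquely ergodic. It therefore has no periodic point at all, since a periodic orbit would carry an invariant atomic probability measure. So $f^q(x)\neq x$ for every $x$ and every $q\neq 0$, i.e.\ $\supp f^q=[0,1)$, and $\supp f^q\cap g^r(\supp f^q)$ is never empty. For reducible $\pi$, $\supp f^q$ is almost surely the union of the irreducible blocks of $f$ having at least two intervals; this is independent of $q$, so it does not shrink either. Rigidity does not say that $f^{q_k}$ is the identity off a small set. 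It says that $f^{q_k}$ is a small but \emph{nonzero} translation $x\mapsto x+t_k$ on a set of measure close to $1$. Since no nontrivial element of $\langle f\rangle$ has small support, your decoupling ``fix $\lambda$, then move a $\mu$-independent small set by $g^r$'' has nothing to act on.

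\textbf{What repairing it would take.} Small supports only appear once $g$ enters, as in Lemma~\ref{lem:small_support}: $h_0=[g,f^q]$ is the identity off $B^c\cup U\cup g^{-1}B^c$ because translations commute. But then the support depends on $\mu$. Also, nothing bounds its number of components independently of $q$; its pieces are typically spread along the $\sim q$ levels of a Rokhlin tower. The union bound of Step~2 costs roughly (number of components)$\times\Leb(\supp h_0)$, so it is out of reach. This lack of geometric control is exactly the obstruction the paper describes, and its way out is to abandon disjointness. You are missing two ideas.
\begin{itemize}
\item[(i)] Unique ergodicity, through uniform Birkhoff averages of $1_{S_0}$, gives $i<N$ such that $f^{-i}S_0$ meets $S$, $f^{-1}S$ and $g^{-1}S$ in proportion $O(\epsilon)$, where $S$ is the support of the current element. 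Combined with $\Leb(\supp[\phi,\psi])\le 3\Leb(\supp\phi\cap\supp\psi)$ and Lemma~\ref{lem:exists_nontrivial_conj}, this produces nontrivial words $w_n$ with $\Leb(\supp w_n(f,g))\le\epsilon(18\epsilon)^n$ and $\abs{w_n}=O(2^n)$.
\item[(ii)] A Diophantine condition on $(\lambda,\mu)$ forces any non-identity element of word length $L$ to translate nontrivially an interval of length at least $C'/L^c$. Hence $w_n(f,g)=\id$ for large $n$ once $18\epsilon<2^{-c}$.
\end{itemize}

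\textbf{Further gaps.} The reducible case needs its own reduction: the paper applies the irreducible statement to $(gf,g)$ on each block between pair-splitting singularities. Your non-degeneracy fix in Step~2 also fails on fixed intervals of $g$, where every branch of $g^r$ translates by exactly $0$.
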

\begin{rmk}
The translation vectors associated to compositions of $f_{\pi, \lambda}$,
$f_{\sigma, \mu}$ and their inverses are of the form $k + \sum_{i=1}^{d-1}
a_i \lambda_i + \sum_{i=1}^{e-1} b_i \mu_i$ for some integers $k, a_1,\dotsc,
a_{d-1}, b_1, \dotsc, b_{e-1}$. There is a full measure set $\Omega'$ such
that, for $(\lambda, \mu) \in \Omega'$, such a linear combination vanishes
only when all the integers are zero, as this amounts to excluding countably
many affine hyperplanes. It follows that, in $\Omega'$, if a composition of
$f_{\pi, \lambda}$, $f_{\sigma, \mu}$ and their inverses sends a singularity
to a singularity, then the integers that show up in the relation are all
zero. Therefore, this coincidence will still be valid for small perturbations
of $f_{\pi, \lambda}$ and $f_{\sigma, \mu}$.

Consider $(\lambda,\mu) \in \Omega \cap \Omega'$, where $\Omega$ is the set
constructed by Theorem~\ref{main_thm}, and the relation $w$ given by this
theorem. Then this relation is robust: small perturbations of $f_{\pi,
\lambda}$ and $f_{\sigma, \mu}$ satisfy the same relation, as the pattern of
singularities and their images remain the same for the original maps and
their small perturbations.

In particular, Theorem~\ref{main_thm} automatically gives a full measure
\emph{open} set of pairs of interval exchange transformations not generating
a free group. This
recovers~\cite[Theorem~5.2]{dahmani_fujiwara_guirardel_iet_free} with a
different construction (and weakening very slightly its assumptions as we do
not need $\sigma$ to be admissible in their sense).
\end{rmk}

The strategy of the proof of Theorem~\ref{main_thm} is starting from the idea
of~\cite{dahmani_fujiwara_guirardel_iet_free}: find an element that is a
small translation on a large set, form a commutator to obtain an element with
small support, then try to move things around to make the supports disjoint
and take again commutators to get the identity. The moving around argument
will fail and instead we will iterate commutators to get elements whose
support has exponentially small measure. To illustrate the argument in the
simplest case, where one does not need to iterate commutators, let us sketch
the proof of the following easy fact:
\begin{lem}
\label{lem:rot} Let $R$ be the rotation by some irrational angle $\theta$,
and let $f$ be an interval exchange transformation. Then $f$ and $R$ do not
generate a free group.
\end{lem}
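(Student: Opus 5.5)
The plan is to build, from $f$ and $R$, an element with very small support, and then conjugate it by a power of $R$ until its support is disjoint from itself. This follows the strategy sketched above, without iterating commutators.

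\emph{Step 1: an element with small support.} Since $\theta$ is irrational, for every $\epsilon>0$ there is $q\geq 1$ such that $R^q$ is the rotation by some angle $\eta$ with $0<\abs{\eta}<\epsilon$; for instance take $q$ a denominator of a convergent of $\theta$. Write $f$ with singularities $x_1,\dotsc,x_{d-1}$ (together with the point $0\equiv 1$ of the circle, since $R$ acts on $\R/\Z$). Consider the commutator $h=[f,R^q]=f R^q f^{-1} R^{-q}$. If a point $y$ and the whole segment between $y$ and $y-\eta$ avoid the images under $f$ of the singularities and of $0$, then $f^{-1}$ is a translation on that segment. Hence $f^{-1}$ commutes there with translation by $\eta$, and $h(y)=y$. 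Therefore $\supp h$ lies in a union $S$ of at most $d+1$ intervals, each of length at most $2\epsilon$, centred at the points $f(x_i)$ and $f(0)$. One should double-check the endpoint conventions for half-open intervals, but this only changes the constants.

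\emph{Step 2: making the support disjoint from a translate.} The set of $t\in\R/\Z$ with $(S+t)\cap S\neq\emptyset$ is contained in $S-S$. This is a union of at most $(d+1)^2$ intervals of length at most $4\epsilon$, so its measure is at most $4(d+1)^2\epsilon$. Choose $\epsilon$ so small that this is less than $1$. The set $S-S$ is a finite union of closed intervals, so its complement contains a nonempty open set $U$. By density of $\{m\theta\}$, there is an integer $m\neq 0$ with $m\theta \in U$. Then $R^m h R^{-m}$ has support contained in $R^m(S)$, which is disjoint from $S$. Two maps with disjoint supports commute. Hence the word
\[
w(x,y)=\bigl[\,[x,y^q],\; y^m [x,y^q] y^{-m}\,\bigr]
\]
satisfies $w(f,R)=\id$.

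\emph{Step 3: the word is nontrivial.} This is the step I expect to need the most care, though it is purely combinatorial. In the free group $F(x,y)$, let $u=[x,y^q]=xy^qx^{-1}y^{-q}$ and $v=y^m u y^{-m}$. If $u$ and $v$ commuted, they would be powers of a common element, hence generate a cyclic subgroup. I would rule this out by abelianization modulo normal closures, or more directly by writing out the reduced word
\[
u v u^{-1} v^{-1} = x y^q x^{-1} y^{m-q} x y^q x^{-1} y^{-q-m}\cdots
\]
and checking that no cancellation kills it, using $q\geq 1$ and $m\neq 0$. The cases $m=\pm q$ deserve a separate check: there some $y$-exponents vanish and letters $x^{\pm1}$ become adjacent, so more cancellation could occur. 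If needed, I would perturb the choice of $m$ within the open set $U$ to avoid them. Since $U$ is open and $\{m\theta\}$ is dense, infinitely many admissible $m$ are available, so this is harmless. Once $w$ is nontrivial, $f$ and $R$ satisfy a nontrivial relation and do not generate a free group.
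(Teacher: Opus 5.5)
Your proof is correct and follows essentially the same route as the paper's sketch. A commutator of $f$ with a small rotation $R^q$ is supported near finitely many points coming from the singularities of $f$; conjugating by a suitable power of $R$ makes this support disjoint from itself, so the commutator of the two is the identity. Your measure bound on $S-S$ for choosing $m$ and your reduced-word check of nontriviality fill in details the paper's sketch leaves implicit; the reduced-word check also goes through directly for $m=\pm q$, so you do not need to perturb $m$.
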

\begin{proof}
Consider $n$ such that $t_n=n\theta$ modulo $1$ is close to $0$. Then $R^n$
is a small translation by $t_n$. Outside of a small neighborhood of the
singularities of $f$, then $h \coloneqq [f, R^n] \coloneqq f^{-1} R^{-n} f
R^n$ is the identity as it maps $x$ to $x + t_n + v - t_n - v$ for some
vector $v$. Consider $q$ such that $R^q$ makes the singularity set of $f$
disjoint from itself. Then $R^{q} h R^{-q}$ has a support close to this
shifted singularity set, so disjoint from the support of $h$. It follows that
$[h, R^{q} h R^{-q}]= \id$, which is the desired relation.
\end{proof}

We will tweak this basic strategy to prove Theorem~\ref{main_thm}, as
follows.

\medskip

Assume first that the permutation $\pi$ is irreducible, i.e., there is no
$0<k < d$ such that $\pi(\{1, \dotsc, k\}) = \{1,\dotsc, k\}$. Then it is a
standard consequence of Rauzy-Veech induction that, if $\lambda$ is typical,
there is an iterate $f_{\pi,\lambda}^q$ which equals a small translation on a
set of measure very close to $1$. It follows that $h_0 \coloneqq [f_{\sigma,
\mu}, f_{\pi,\lambda}^q]$ is the identity on a set of measure at least
$1-\epsilon$, as in the proof of Lemma~\ref{lem:rot}. However, we have no
explicit control on the geometric shape of the support of $h_0$, so there is
no direct way to translate things as in the proof of the lemma to get
disjoint supports. It turns out that we will not need to understand this
shape. Instead, we will iterate commutators (with suitable conjugacies to
make things quantitatively disjoint, thanks to Birkhoff theorem), to obtain a
sequence $h_n$ whose support has a measure tending to $0$ very quickly, like
$\epsilon^n$. Moreover, the word length of $h_n$ will grow at most like $C
2^n$.

We will get a contradiction from a diophantine argument. If $h$ belongs to
the group generated by $f_{\pi,\lambda}$ and $f_{\sigma, \mu}$, then its
singularities are located at points of the form $\sum_{i=1}^{d} a_i \lambda_i
+ \sum_{j=1}^{e} b_j \mu_j$ for some integers $a_1,\dotsc, a_d, b_1, \dotsc,
b_e$. The difference between two successive singularities has the same shape.
When such an expression is nonzero, a Borel-Cantelli argument shows that for
typical $(\lambda, \mu)$ it is bounded below by $\max(\max_i \abs{a_i},
\max_j \abs{b_j})^{-c}$ for some~$c$. Therefore, this quantity is a lower
bound on the measure of the support of $h$ if it is not the identity. When
$h$ has word length $N$, then all the integers in this formula are bounded by
$C N$, and we get a lower bound for the measure of the support of the form
$1/N^c$.

Applying this argument to $h_n$ gives a contradiction: if it is not the
identity, then the measure of its support should both be smaller than
$\epsilon^n$ and larger than $1/(2^n)^c$, which is absurd if $\epsilon$ is
small enough to start with.

\medskip

If $\pi$ is not irreducible, this argument does not go through. However, we
can decompose maximally $[0,1)$ into the disjoint union of some intervals
$P_i$ stable under both $f_{\pi,\lambda}$ and $f_{\sigma, \mu}$. On each
$P_i$, the composition $f_{\sigma, \mu} f_{\pi,\lambda}$ is an interval
exchange transformation for an irreducible permutation, and its interval
lengths have a distribution which is absolutely continuous with respect to
Lebesgue measure. Therefore, we may apply the irreducible case to the pair
$(f_{\sigma, \mu}f_{\pi,\lambda}, f_{\sigma, \mu})$ to get a relation $w_i$
that they satisfy on $P_i$. Taking commutators of the $w_i$, one gets a
nontrivial relation valid on the whole interval $[0,1)$.

Note that the pair $(f_{\sigma, \mu} f_{\pi,\lambda}, f_{\sigma, \mu})$ is
not an independent pair, so Theorem~\ref{main_thm} in the irreducible case is
not enough to conclude in the non-irreducible case. However, the proof in the
irreducible case does not need the full strength of independence: we will
give a more precise statement in Theorem~\ref{thm:irred}, requiring only that
the first map is uniquely ergodic and rigid by translations and the pair is
diophantine, that will be sufficient for the reduction.

\medskip

Section~\ref{sec:irred} gives the details of the above argument when $\pi$ is
irreducible, culminating in Theorem~\ref{thm:irred}. Section~\ref{sec:red}
completes the proof of Theorem~\ref{main_thm} by reducing it to the
irreducible case.

\medskip

\textbf{AI methodology.} I had been thinking about this problem some years
ago, trying to take advantage of the fact that Rauzy-Veech induction makes it
possible to create commutators with small support. I was stuck due to the
lack of a good enough geometric control on the shape of this support. I gave
one single prompt to ChatGPT 6 Astra with the general strategy of the proof
and a record of my failed attempts. It answered back the (clever and nice!)
idea to iterate the commutators and only rely on a measure control instead of
a geometric control. I digested, simplified and extended its argument. The
paper is entirely human-written.

\medskip

\textbf{Acknowledgments.} This paper grew out of a collective experiment in
the ergodic theory team at Rennes University around AI tools, to discuss
knowingly how we wanted to use them or to avoid them.

\section{The irreducible case}

\label{sec:irred}

In this section, we give a precise version of Theorem~\ref{main_thm} when the
permutation $\pi$ is irreducible. We will use explicit conditions for the
interval exchange transformations, formulated in the following definitions.

\begin{definition}
An interval exchange transformation is uniquely ergodic if its only invariant
probability measure is Lebesgue measure.
\end{definition}
Almost every interval exchange transformation over $d\ge 2$ intervals is
uniquely ergodic, see e.g.~\cite[Theorem~1.2]{veech_metric_1}.

\begin{definition}
An interval exchange transformation $f$ is rigid by translations if there
exist $q_n \to \infty$ and $t_n \to 0$ and measurable sets $B_n$ with
$\Leb(B_n) \to 1$ such that, for all $n$ and all $x \in B_n$,
\begin{equation*}
  f^{q_n}(x) = x + t_n.
\end{equation*}
\end{definition}

\begin{definition}
A pair of interval exchange transformations $(f_{\pi, \lambda}, f_{\sigma,
\mu})$ is diophantine if there exist $C>0$ and $c>0$ such that, for all
integers $a_1,\dotsc, a_{d}, b_1,\dotsc, b_{e} \in \Z$,
\begin{equation*}
  \abs*{\sum_{i=1}^d a_i \lambda_i + \sum_{j=1}^e b_j \mu_j} \geq C / \max (\max_i \abs{a_i}, \max_j \abs{b_j})^c
\end{equation*}
whenever the left hand term is nonzero.
\end{definition}

These conditions have full Lebesgue measure, as we explain now.

\begin{prop}
\label{prop:rigid_transl} Assume that $\pi$ is irreducible over $d\ge 2$
intervals. Then, for Lebesgue almost every $\lambda \in \Delta_\pi$, the
interval exchange transformation $f_{\pi,\lambda}$ is rigid by translations.
\end{prop}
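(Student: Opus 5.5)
Our plan is to derive rigidity by translations from Rauzy--Veech induction, using the recurrence of the Rauzy--Veech (or Zorich) renormalization for almost every $\lambda$ (Veech, Masur). Rauzy--Veech induction applied to $f=f_{\pi,\lambda}$ produces a nested sequence of intervals $J^{(n)}=[0,\abs{\lambda^{(n)}})$ with $\abs{\lambda^{(n)}}\to 0$. The first return map of $f$ to $J^{(n)}$ is an interval exchange $f_{\pi^{(n)},\lambda^{(n)}}$, where $\pi^{(n)}$ lies in the Rauzy class of $\pi$. The interval $[0,1)$ then decomposes as a disjoint union of $d$ Rokhlin towers. The base of the $i$-th tower is the subinterval $J^{(n)}_i$ of $J^{(n)}$, its height is $h^{(n)}_i$ (the column sums of the product $B^{(n)}$ of Rauzy matrices), and $f$ acts on each floor as a translation. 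Our goal is to find, for typical $\lambda$, infinitely many $n$ at which most of the measure lies in towers of one common height $q$, with most floors returning close to themselves.

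The key step is to use recurrence to a fixed well-chosen configuration. We fix a permutation $\pi'$ in the Rauzy class and a finite Rauzy path $\gamma$ starting and ending at $\pi'$ whose matrix $B_\gamma$ is strictly positive. We also choose the path so that, after running $\gamma$, one tower, say tower $k$, carries proportion at least $1-\delta$ of the total measure $\sum_i \lambda_i h_i$ for every starting length vector. For example, $\gamma$ can begin with a long stretch of the same letter winning repeatedly against one loser: that letter's length then shrinks only slightly, while the corresponding column of the matrix piles up. By ergodicity of the Rauzy--Veech renormalization on the projectivized parameter space, for almost every $\lambda$ the induction follows $\gamma$ at infinitely many times $n$, and each such occurrence has a positive conditional probability. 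Since $\delta$ is arbitrary and there are countably many choices $\delta=1/m$, we obtain a full measure set on which the situation occurs for all $\delta>0$.

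Now fix such a time $n$ and look at the tower $T_k$ of height $q=h^{(n)}_k$ and base $J^{(n)}_k$, with $\Leb(T_k)\geq 1-\delta$. For $x$ in a floor $f^j(J^{(n)}_k)$, the point $f^q(x)$ is obtained by going up the tower, applying the return map on $J^{(n)}_k$, and coming back up $j$ floors. On the set of base points whose image under the return map $f_{\pi^{(n)},\lambda^{(n)}}$ lands again in $J^{(n)}_k$, the return map is a translation by a single vector $t$ with $\abs t\leq \abs{\lambda^{(n)}}\to0$. The subsequent $j$ steps are then the same translations as for the original point, so $f^q(x)=x+t$. We therefore set $B_n$ to be the union over $j<q$ of $f^j(\{y\in J^{(n)}_k : \text{the return map translates } y \text{ inside } J^{(n)}_k\})$, and we need $\Leb(B_n)\geq 1-O(\delta)$. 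This amounts to requiring that the return map sends all of $J^{(n)}_k$ except a small proportion into $J^{(n)}_k$. We can build this into the choice of $\gamma$: choose it so that, at the end, $J^{(n)}_k$ occupies a proportion at least $1-\delta$ of $J^{(n)}$, and take $k$ to be the winning letter. Then the image of $J^{(n)}_k$ under the return map misses $J^{(n)}_k$ in length at most $\delta\abs{J^{(n)}}$, which is small relative to $\Leb(J^{(n)}_k)$, and multiplying by the heights gives an error of order $\delta$ in measure.

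The main obstacle is designing $\gamma$ so that all three properties hold at once, uniformly in the starting data within a set of positive measure: one dominant tower in measure, a dominant base interval, and a common translation on most of that base. We would first try taking $\pi'$ to have the standard form (with $\pi'(1)=d$ and $\pi'(d)=1$), where repeated induction steps of the same type behave like a rotation-type Euclidean algorithm. Positivity of the conditional probability of such long single-letter stretches then follows from the explicit Veech invariant density, which is bounded on compact parts of the simplex. Finally we let $\delta_m\to0$ and extract the sequences $q_m=h_k$, $t_m$ and $B_m$; we also have $q_m\to\infty$, since the heights diverge.
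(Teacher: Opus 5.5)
Your last paragraph is correct and matches the paper's final step. Given an interval $J=J^{(n)}_k$ whose first $q$ iterates are disjoint, singularity-free and fill $1-O(\delta)$ of $[0,1)$, and with $J\cap T^{-1}J$ filling most of $J$, propagating the translation $t$ up the floors gives rigidity. The paper obtains such a tower directly from Veech's Theorem~1.4 in \cite{veech_metric_1}. You instead try to build it from Rauzy--Veech recurrence, and that step does not work as described. Following a finite path $\gamma$ from time $n$ to $m=n+|\gamma|$ only says $\lambda^{(n)}\in B_\gamma\R_+^d$, and then $\lambda^{(m)}=B_\gamma^{-1}\lambda^{(n)}$ ranges over the whole simplex. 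So no choice of $\gamma$ can force, at the end of $\gamma$ and ``for every starting length vector'', that $J^{(m)}_k$ fills $1-\delta$ of $J^{(m)}$. Nor can it force one tower to carry $1-\delta$ of the mass: for fixed $h^{(m)}$, the normalized masses $\lambda^{(m)}_ih^{(m)}_i$ also sweep the whole simplex. Your winning-stretch heuristic is also off. When $\alpha$ beats $\beta$, $\lambda_\alpha$ drops by $\lambda_\beta$ and it is $h_\beta$, not $h_\alpha$, that grows, so the winner's tower never gets taller. If you look at the start of the stretch instead, $\lambda_k$ does dominate the base. But then $h^{(n)}$ is set by the uncontrolled past, and tower $k$ need not dominate in measure. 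You flag this as the main obstacle, but the proposal leaves it unresolved, and it is the whole content of the statement.

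The missing idea is to separate the two requirements in time: put the positive loop in the past and the length condition in the present. Suppose steps $n-|\gamma|,\dots,n$ follow a positive loop $\gamma$ at $\pi'$. Then $h^{(n)}=B_\gamma^{T}h^{(n-|\gamma|)}$ gives $h^{(n)}_i/h^{(n)}_j\le C_\gamma$, with $C_\gamma$ depending only on the entries of $B_\gamma$, whatever the past. Ask in addition that $\lambda^{(n)}\in R=\{\lambda_k\ge(1-\delta/C_\gamma)\abs{\lambda}\}$. The set of $\lambda^{(n-|\gamma|)}$ realizing both is (projectively) $B_\gamma R$, which has positive measure. Conservativity and ergodicity of the induction then give infinitely many such $n$ for almost every $\lambda$; no standard form or long stretch is needed. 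At such $n$ we have $\sum_{i\ne k}\lambda^{(n)}_ih^{(n)}_i\le C_\gamma h^{(n)}_k\sum_{i\ne k}\lambda^{(n)}_i\le\delta h^{(n)}_k\abs{\lambda^{(n)}}$. So tower $k$ carries mass at least $1-\delta$, its base fills $1-\delta$ of $J^{(n)}$, and your final argument applies. This is essentially the proof of the Veech theorem that the paper uses as a black box.
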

\begin{proof}
This is a standard consequence of the ergodicity of Rauzy-Veech induction,
more precisely of the fact that for some induced map, one interval will be
much larger than the other ones, and show up with frequency bounded from
below at previous steps. For non-experts, let us derive the result
from~\cite[Theorem~1.4]{veech_metric_1} which is proved by implementing this
strategy.

We apply Veech's theorem to $\epsilon=1/n$. For almost every $f=f_{\pi,
\lambda}$, we get a large $N$ (this will be our $q_n$) and an interval $J$
whose iterates until time $N$ are disjoint and singularity-free, and cover a
proportion $1-\epsilon$ of the space, and the interval $K \coloneqq f^{-N} J
\cap J$ covers a proportion at least $1-\epsilon$ of $J$. Take $B_n =
\bigcup_{i=0}^{N-1} f^i (K)$. Its measure is close to $1$. As $N$ is large,
the interval $J$ is short. Since $f^{-N} J \cap J \ne \emptyset$, this
implies that $f^N$ is a small translation by a number $t_n \to 0$ on $J$, and
therefore on $K$. Let $i<N$. Applying $f^i$ to two points $x \in K$ and $f^N
x = x+t_n \in J$, one deduces that $f^N$ is the translation by $t_n$ on
$f^i(K)$.
\end{proof}

\begin{prop}
\label{prop:dioph} Fix two permutations $\pi$ and $\sigma$. Then almost every
pair of interval exchange transformations in $\Delta_\pi \times
\Delta_\sigma$ is diophantine.
\end{prop}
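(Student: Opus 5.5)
The approach is a Borel–Cantelli argument over integer vectors, using the fact that the constraint sum is an affine function on the parameter space. First we parametrize $\Delta_\pi \times \Delta_\sigma$ by the free coordinates $(\lambda_1,\dotsc,\lambda_{d-1},\mu_1,\dotsc,\mu_{e-1})$ in a bounded open subset of $\R^{d+e-2}$. We substitute $\lambda_d = 1-\sum_{i<d}\lambda_i$ and $\mu_e = 1-\sum_{j<e}\mu_j$. Then for $(a,b)\in\Z^{d+e}$ we get
\begin{equation*}
L_{a,b}(\lambda,\mu) \coloneqq \sum_{i=1}^d a_i\lambda_i+\sum_{j=1}^e b_j\mu_j = a_d+b_e+\sum_{i<d}(a_i-a_d)\lambda_i+\sum_{j<e}(b_j-b_e)\mu_j .
\end{equation*}
This is an affine function with integer coefficients, and we set $M=\max(\max_i\abs{a_i},\max_j\abs{b_j})$.

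We split into two cases according to the linear part. If all linear coefficients $a_i-a_d$ and $b_j-b_e$ vanish, then $L_{a,b}$ is the constant integer $a_d+b_e$. So either it is zero (no condition to check), or its absolute value is at least $1\geq 1/M^c$ (note $M\geq 1$ when the sum is nonzero). Otherwise some linear coefficient is a nonzero integer, hence of absolute value at least $1$. By Fubini in that coordinate, the set of parameters where $\abs{L_{a,b}}<\delta$ has measure at most $2\delta$ times a constant depending only on the size of the bounded parameter domain, because the slab has width at most $2\delta$ in that direction.

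We then take $\delta = M^{-c}$ with $c > d+e+1$. The number of integer vectors with $\max = M$ is $O(M^{d+e-1})$, so
\begin{equation*}
\sum_{(a,b)} \Leb\{\abs{L_{a,b}}<M^{-c}\} \lesssim \sum_{M\geq1} M^{d+e-1-c}<\infty .
\end{equation*}
By Borel–Cantelli, almost every $(\lambda,\mu)$ lies in only finitely many of the bad sets $\{\abs{L_{a,b}}<M^{-c}\}$, i.e.\ there are finitely many exceptional vectors $(a,b)$.

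For each of these finitely many exceptions with $L_{a,b}(\lambda,\mu)\neq 0$, the ratio $\abs{L_{a,b}(\lambda,\mu)}M^c$ is a positive number. Choosing $C$ smaller than $1$ and smaller than all these finitely many ratios gives the inequality in the definition for all $(a,b)$, so the pair is diophantine.

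There is no real obstacle here. The only points requiring care are the reduction to free coordinates, since the relations $\sum\lambda_i=\sum\mu_j=1$ make some vectors give constant $L_{a,b}$; these are handled by the integrality remark above. The other point is that $C$ is allowed to depend on $(\lambda,\mu)$, which is what absorbs the finitely many exceptions.
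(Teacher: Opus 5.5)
Your argument is correct and is essentially the paper's. The paper invokes the easy direction of Khintchine--Groshev in the reduced coordinates $(k, a_i-a_d, b_j-b_e)$ and remarks that this is just Borel--Cantelli over thickened hyperplanes. You carry out that same Borel--Cantelli by hand, directly in the max norm of the definition, and your cruder slab estimate costs you a larger exponent than the paper's $c=d+e-1$, which is harmless.

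One point should be stated explicitly: your Borel--Cantelli sum runs only over vectors with nonconstant $L_{a,b}$. The infinitely many vectors $a=(t,\dotsc,t)$, $b=(-t,\dotsc,-t)$ give $L_{a,b}\equiv 0$, so their ``bad sets'' are the whole domain and the unrestricted sum would diverge.
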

\begin{proof}
Define $\psi(q) = 1/q^c$. It is enough to show that, almost surely, there are
only finitely many solutions in integer numbers $(k, a_1,\dotsc, a_{d-1},
b_1,\dotsc, b_{e-1})$ to the inequality
\begin{equation*}
  \abs*{k + \sum_{i=1}^{d-1} a_i \lambda_i + \sum_{i=1}^{e-1} b_i \mu_i} < \psi\pare*{\sum \abs{a_i} + \sum \abs{b_i}}.
\end{equation*}
Indeed, note that $\lambda_d = 1 - \sum_{i=1}^{d-1} \lambda_i$ and $\mu_e = 1
- \sum_{i=1}^{e-1} \mu_j$, so the contributions of $\lambda_d$ and
$\lambda_e$ can be moved to a common integer $k$.

The easy direction of the Khintchine-Groshev theorem (see
e.g.~\cite{beresnevich_velani}) shows that this is the case when
\begin{equation}
\label{eq:sum_groshev}
  \sum q^{(d-1 + e-1) - 1} \psi(q) < \infty.
\end{equation}
The result follows for $c=d+e-1$ for instance.

Note that there is nothing deep in this proof: it is a straightforward
Borel-Cantelli argument. The solutions to the inequality form a slightly
thickened hyperplane, and the sum of their measures over varying $(k, a_1,
\dotsc, a_{d-1}, b_1, \dotsc, b_{e-1})$ is finite when~\eqref{eq:sum_groshev}
holds.
\end{proof}

Here is the precise version of Theorem~\ref{main_thm} in the irreducible
case.

\begin{thm}
\label{thm:irred} Let $\pi$ be a permutation of $\{1,\dotsc, d\}$, and let
$\sigma$ be a permutation of $\{1,\dotsc, e\}$. Consider $\lambda \in
\Delta_\pi$ and $\mu \in \Delta_\sigma$ such that $f_{\pi,\lambda}$ is
uniquely ergodic and rigid by translations, and $(f_{\pi, \lambda},
f_{\sigma, \mu})$ is diophantine. Then the pair $(f_{\pi, \lambda},
f_{\sigma, \mu})$ does not generate a free group: there exists a nontrivial
word $w \in F_2(x,y)$ with $w(f_{\pi, \lambda}, f_{\sigma, \mu}) = \id$.
\end{thm}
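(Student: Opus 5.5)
Write $f=f_{\pi,\lambda}$ and $g=f_{\sigma,\mu}$. The plan is to build words $h_n$ in $f,g$ such that:
\begin{itemize}
\item the support of $h_n$ has measure at most $\epsilon^{n}$, up to a constant;
\item $h_n$ has word length at most $C2^n$;
\item every $h_n$ is a nontrivial element of the free group $F_2(x,y)$.
\end{itemize}
The diophantine condition then forces $h_n=\id$ for large $n$.

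\textbf{Initial step.} Fix a small $\epsilon>0$ and use rigidity by translations to choose $q$ with $f^q=\id+t$ on a set $B$, where $\Leb(B)\geq 1-\epsilon$ and $t$ is small. Consider $h_0=[g,f^q]$. A point $x$ is fixed by $h_0$ whenever $x\in B$, $g(x)\in B$, and $x$, $x+t$ lie in the same continuity interval of $g$. So $\Leb(\supp h_0)\leq 3\epsilon$ once $t$ is small relative to the lengths $\mu_j$.

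\textbf{Iteration.} Given $h_n$ with support $S_n$, choose an exponent $p_n$ bounded independently of $n$, say among $\{1,\dots,P\}$, such that
\[
\Leb(S_n\cap f^{p_n}S_n)\leq 2\Leb(S_n)\cdot \epsilon'.
\]
Then set $h_{n+1}=[h_n,f^{p_n}h_nf^{-p_n}]$. Its support lies in $S_n\cup f^{p_n}S_n$, and off $S_n\cap f^{p_n}S_n$ the two elements have disjoint supports. One checks that $h_{n+1}$ is then supported in $(S_n\cap f^{p_n}S_n)\cup h_n^{\pm1}(\cdots)$, which has measure $\leq 4\Leb(S_n\cap f^{p_n}S_n)$ or so.

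The existence of $p_n$ comes from unique ergodicity. The averages $\frac1P\sum_{p<P}\mathbf 1_A\circ f^{p}$ converge uniformly to $\Leb(A)$ for $A$ a finite union of intervals, and $S_n$ is such a set. Uniform convergence fails when $A$ varies, so instead I average
\[
\frac1P\sum_{p=1}^{P}\Leb(S\cap f^pS)
\]
and aim to show it is $\leq \Leb(S)\,\delta$ for $P$ large, uniformly in $S$.

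\textbf{Main obstacle.} The uniform bound above is where I expect the difficulty. The quantity equals $\int \mathbf 1_S\cdot A_P\mathbf 1_S$, and $A_P\mathbf 1_S\to\Leb(S)$ only weakly for general $S$. A cleaner route is to avoid uniformity: let the conjugating power grow with $n$ but only slowly. I will take $p_n\leq K^n$ with $K$ fixed, chosen via the $L^2$ mean ergodic theorem applied to $\mathbf 1_{S_n}$. Ergodicity gives
\[
\frac1P\sum_p\Leb(S\cap f^pS)\to\Leb(S)^2,
\]
so some $p$ has $\Leb(S\cap f^pS)\leq 2\Leb(S)^2$. This gives doubly exponential decay $\Leb(S_{n+1})\lesssim\Leb(S_n)^2$, but with $p_n$ possibly huge and uncontrolled. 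To control $p_n$ I would use rigidity again: replace the conjugator by $f^{p}$ with $p$ chosen among multiples of rigidity times, where translation by a small amount moves the set off itself on average. Otherwise I accept any length $L_n$ and instead require that $\Leb(S_n)^{-1}$ dominate $L_n^{c}$. Doubly exponential decay $\epsilon^{2^n}$ versus length $L_{n+1}=2L_n+2p_n$ leaves room if the $p_n$ grow at most like $\exp(C2^n)$ with small $C$. Securing this bound on $p_n$ is the delicate point.

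\textbf{Conclusion.} Each commutator of $h_n$ with a conjugate by a nonzero power of $x$ is nontrivial in $F_2$, since $h_n\neq 1$ and the two do not commute (nontrivial conjugates of non-powers of $x$). Hence $w=h_n$ is nontrivial. If $h_n(f,g)\neq\id$, its support is a finite union of intervals with endpoints at points $\sum a_i\lambda_i+\sum b_j\mu_j$ whose coefficients are bounded by a constant times the length of $h_n$. By the diophantine condition, the support then has measure $\geq C/(\text{length})^{c}$, contradicting the decay bound for $n$ large. So some $h_n(f,g)=\id$.
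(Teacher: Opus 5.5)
Your proposal has a genuine gap, and it is the step you flag yourself as delicate: the iteration is never established. Commuting $h_n$ with a conjugate of \emph{itself} needs a controlled exponent $p$ with $\Leb(S_n\cap f^{p}S_n)$ small compared to $\Leb(S_n)$, and you have no information on the shape of $S_n$.

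The bounded-$p$ version is false for general finite unions of intervals. Since $f$ has no periodic points, there is a small interval $J$ with $J, fJ,\dots,f^{M}J$ pairwise disjoint intervals. For $S=\bigcup_{i\le M}f^iJ$ one then has $\Leb(S\cap f^pS)\ge\frac{M-p+1}{M+1}\Leb(S)$ for every $p\le P\ll M$.

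The mean-ergodic fallback does give some $p_n$ with $\Leb(S_n\cap f^{p_n}S_n)\le 2\Leb(S_n)^2$. However, the ergodic theorem has no rate uniform in the set, so nothing prevents $p_n$ from being far larger than $\exp(C2^n)$. Then the comparison with the diophantine lower bound $C/\abs{h_n}^c$ cannot be closed. Rigidity times do not help either, for the same reason: $S_n$ may be nearly invariant under a small translation.

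The missing idea is to move the uniformity from the varying set to a fixed one. Commute $h_n$ not with its own conjugate but with a conjugate $f^{-i}h_0f^{i}$ of the \emph{fixed} element $h_0$. You already note that Birkhoff averages of $1_A$ converge uniformly for a fixed finite union of intervals $A$ (modulo the standard blow-up of discontinuity orbits). Applied to $A=S_0$, this gives a single $N$ with $\frac1N\sum_{i<N}1_{S_0}(f^ix)\le 2\epsilon$ for all $x$. Integrating over an arbitrary $S$ then yields some $i<N$ with $\Leb(S\cap f^{-i}S_0)\le 2\epsilon\Leb(S)$, uniformly in $S$.

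Together with $\Leb(\supp[\phi,\psi])\le 3\Leb(\supp\phi\cap\supp\psi)$, this gives geometric decay $\Leb(S_{n+1})\le C\epsilon\Leb(S_n)$. The word length stays $\le 2\abs{w_n}+4N+2\abs{w_0}=O(2^n)$, which beats the diophantine bound once $C\epsilon<2^{-c}$; doubly exponential decay is not needed.

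The price is nontriviality. Your argument is fine for self-commutators, since the $h_n$ lie in $[F_2,F_2]$ and so are not powers of $x$. But $[w_n,x^{-i}w_0x^{i}]$ may be trivial in $F_2$. The paper therefore bounds simultaneously the intersections of $f^{-i}S_0$ with $S$, $f^{-1}S$ and $g^{-1}S$, which gives $C=18$. It then uses that one of $[w,v]$, $[x^{-1}wx,v]$, $[y^{-1}wy,v]$ is nontrivial, because centralizers in $F_2$ are cyclic.
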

When $\pi$ is irreducible and $d\ge 2$, the three conditions have full
measure, by the propositions above. Therefore, this theorem implies
Theorem~\ref{main_thm} in the irreducible case. The additional information we
get by specifying the shape of the full measure set will be instrumental in
the reduction from the general case to the irreducible case.

Before starting the proof, let us record an easy lemma which will be used to
ensure that the words we construct are always nontrivial.
\begin{lem}
\label{lem:exists_nontrivial_conj} Let $u$ and $v$ be two nontrivial words in
$F_2(x,y)$. Then $[u, v] \ne 1$ or $[x^{-1}ux, v] \ne 1$ or $[y^{-1}uy, v]
\ne 1$.
\end{lem}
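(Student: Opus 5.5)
We will use the standard fact about free groups that two elements commute if and only if they are powers of a common element; equivalently, they lie in a common maximal cyclic subgroup $\langle r\rangle$ with $r$ not a proper power. We argue by contradiction and assume that all three commutators $[u,v]$, $[x^{-1}ux,v]$ and $[y^{-1}uy,v]$ are trivial.

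Since $v \neq 1$, its centralizer $C(v)$ is infinite cyclic, say $C(v)=\langle r\rangle$ with $r$ not a proper power. The assumption says that $u$, $x^{-1}ux$ and $y^{-1}uy$ all lie in $\langle r\rangle$. So we may write $u=r^k$ with $k\neq 0$, because $u\ne 1$.

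Consider first the conjugate $x^{-1}ux = (x^{-1}rx)^k$, which is also a power $r^m$. The element $r^m$ is then conjugate to $r^k$. Cyclically reduced lengths are conjugacy invariants and scale by $\abs{k}$ under $k$-th powers, so $\abs{m}=\abs{k}$. Hence $(x^{-1}rx)^k = r^{\pm k}$.

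Roots are unique in a free group: if $a^k=b^k$ with $k\neq 0$ then $a=b$. This gives $x^{-1}rx = r^{\pm1}$. The sign $-1$ is impossible, since the abelianization would give $2[r]=0$ in $\Z^2$, so $[r]=0$; then $x^{-1}rx=r^{-1}$ together with $r\neq 1$ contradicts the fact that $r$ and $r^{-1}$ are not conjugate in a free group. One can see this via commutation: $x^2$ would commute with $r$, forcing $x$ and $r$ into a common cyclic group, so $x$ commutes with $r$, so $r=r^{-1}$, which is absurd in a torsion-free group.

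Therefore $x$ commutes with $r$, and by the same argument so does $y$. Then $r$ is central in $F_2(x,y)$. Since $x$ and $y$ do not commute, the center of $F_2$ is trivial, so $r=1$. This contradicts $v\neq 1$.

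The main step to justify carefully is the elimination of $x^{-1}rx=r^{-1}$. A cleaner route is to say directly that $x^{-1}rx\in\langle r\rangle$ and $x^{-1}rx$ is again a primitive root, so it equals $r^{\pm1}$. In the $-1$ case, $x^2\in C(r)=\langle r\rangle$, so $x$, being the unique root of $x^2$ within the maximal cyclic subgroup, lies in $\langle r\rangle$, and hence commutes with $r$.
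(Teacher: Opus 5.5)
Your proof is correct and follows essentially the same route as the paper: cyclicity of the centralizer of $v$, a conjugacy-invariant length argument forcing the conjugate to be $r^{\pm1}$, exclusion of the inverse, and the fact that $x$ and $y$ cannot both commute with a nontrivial element. The differences are cosmetic. You pass to the primitive root $r$ via uniqueness of roots, whereas the paper works with $u$ directly. You prove by hand that $r$ and $r^{-1}$ are not conjugate instead of citing Lyndon--Schupp; the abelianization remark is superfluous and can be dropped. Your use of cyclically reduced length is the precise form of the paper's ``same reduced length'' step.
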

\begin{proof}
Assume by contradiction that all three commutators are the identity. The
three words $u$, $x^{-1}ux$ and $y^{-1}uy$ belong to the centralizer of $v$,
which is cyclic by~\cite[Proposition~2.19]{lyndon_schupp}. As $x^{-1}ux$ and
$u$ have the same reduced length, it follows that $x^{-1}ux = u^{\pm 1}$. But
$u$ is not conjugate to $u^{-1}$ by the proof of
~\cite[Proposition~2.19]{lyndon_schupp}, so $x^{-1}ux=u$. In particular, $x$
belongs to the centralizer of $u$. By the same argument, $y$ also belongs to
this centralizer. This is a contradiction as this centralizer is cyclic.
\end{proof}

The rest of this section is devoted to the proof of Theorem~\ref{thm:irred}.
From this point on, we fix $\lambda$ and $\mu$ satisfying its assumptions. We
will write $f = f_{\pi, \lambda}$ and $g = f_{\sigma, \mu}$. Let also
$\epsilon>0$ be small enough (how small will be determined at the end of the
proof). We will write $\supp h$ for the support of an interval exchange
transformation $h$, i.e., the points which are not fixed by $h$. Let us also
denote by $\abs{w}$ the word length of a word $w \in F_2(x,y)$.

\begin{lem}
\label{lem:small_support} There exists a nontrivial word $w_0 \in F_2(x,y)$
such that the support of $w_0(f, g)$ has measure at most $\epsilon$.
\end{lem}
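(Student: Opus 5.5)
The plan is to take $w_0 = [y, x^{q}] = y^{-1}x^{-q}yx^{q}$ for a suitable large $q$ given by rigidity by translations. So $w_0(f,g) = g^{-1} f^{-q} g f^{q}$. Since $f$ is rigid by translations, we pick $n$ large, set $q = q_n$, $t = t_n$, $B = B_n$, and arrange $\Leb(B) \geq 1 - \epsilon/4$ and $\abs{t}$ as small as we like. The word $w_0$ is nontrivial in $F_2(x,y)$ because $q \ne 0$: it is a commutator of non-commuting elements.

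Next we show that $h \coloneqq g^{-1} f^{-q} g f^{q}$ is the identity on most points. Let $S$ be the finite set of singularities of $g$, with $e-1$ interior points together with the endpoints. Let $U$ be the $\abs{t}$-neighborhood of $S$, so $\Leb(U) \leq 2(e+1)\abs{t}$. Take $x \in B$ with $x + t \in [0,1)$ such that $x$ and $x+t$ lie in the same continuity interval of $g$; this fails only when $x \in U$. Then $f^q x = x + t$ and $g(x+t) = g(x) + t$. If moreover $g(x) \in B$, then $f^q(g x) = gx + t$, and $f^q$ is injective, so $f^{-q}(g x + t) = g x$. Hence $h(x) = g^{-1}(gx) = x$.

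It follows that $\supp h \subset (B^c) \cup g^{-1}(B^c) \cup U \cup [1-\abs{t},1)$. Since $g$ preserves Lebesgue measure, the measure of this set is at most $2(\epsilon/4) + (2e+3)\abs{t}$, which is at most $\epsilon$ once $n$ is large enough that $\abs{t_n}$ is small. This gives the lemma.

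No step is a real obstacle; the only care needed is the bookkeeping of exceptional sets. One point to handle is that $f^{-q}$ applied to $gx + t$ requires inverting, which is settled by the injectivity of $f^q$ as above. Unique ergodicity is not needed here; it will presumably enter later, in the Birkhoff-theorem step of the iteration.
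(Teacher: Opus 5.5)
Your proof is correct and is essentially the paper's argument: you use the same word $[y,x^q]$, giving $g^{-1}f^{-q}gf^q$, and show it fixes every point outside the same exceptional set $B^c \cup g^{-1}(B^c) \cup U$. The one extra piece, $[1-\abs{t},1)$, is harmless but redundant, because $x\in B$ already forces $x+t=f^q(x)\in[0,1)$.
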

\begin{proof}
As $f$ is rigid by translations, we may find $q$ large and $t$ small and $B$
of measure at least $1-\epsilon/3$ such that $f^q (x) = x + t$ on $B$. If $t$
is small enough, the $t$-neighborhood $U$ of the singularities of $g$ has
measure at most $\epsilon/3$.

Outside of $B^c \cup U \cup g^{-1} B^c$, we claim that $[g, f^q](x) = x$.
Indeed, $x$ and $f^q x = x+t$ are in the same linearity interval of $g$ as $x
\notin U$, with a translation vector $v$. Then $g f^q (x) = g (x + t) =
x+t+v$, while $f^q g (x) = f^q(x + v) = x + v + t$ as $g(x)\in B$. These two
quantities coincide, proving the claim.

It follows that the measure of the support of $[g, f^q]$ is at most $3 \cdot
\epsilon/3 = \epsilon$.
\end{proof}

Let $S_0$ be the support of $h_0 \coloneqq w_0(f, g)$ as constructed in
Lemma~\ref{lem:small_support}. It is a finite union of intervals, and
satisfies $\Leb(S_0) \le \epsilon$. We will now use unique ergodicity to make
sure that most points do not visit $S_0$ often.

\begin{lem}
There exists $N$ such that, for any $x \in [0,1)$,
\begin{equation}
\label{eq:uniform}
  \frac{1}{N} \sum_{i=0}^{N-1} 1_{S_0}(f^i x) \le 2\epsilon.
\end{equation}
\end{lem}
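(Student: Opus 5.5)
We want uniform convergence of Birkhoff averages of $1_{S_0}$ for the uniquely ergodic map $f$. The function $1_{S_0}$ is not continuous, and $f$ is not continuous either, so the classical Oxtoby theorem does not apply verbatim. We therefore approximate $1_{S_0}$ from above by a function whose Birkhoff averages we can control uniformly.

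The plan is a standard upper semicontinuity argument in the spirit of Oxtoby. Since $S_0$ is a finite union of intervals $[a_k,b_k)$ with $\Leb(S_0)\le\epsilon$, choose a slightly larger finite union of intervals $S_0'\supset S_0$ with $\Leb(S_0')\le 3\epsilon/2$. Take a continuous function $\phi\colon[0,1]\to[0,1]$ with $\phi\ge 1_{S_0}$ and $\int\phi<3\epsilon/2$; for instance, $\phi=1$ on the closure of $S_0$, $\phi=0$ outside $S_0'$, and linear in between. It suffices to show
\[
  \limsup_{N\to\infty}\sup_{x}\frac1N\sum_{i<N}\phi(f^ix)\le\int\phi\,\dd\Leb,
\]
since then some $N$ gives the bound $2\epsilon$ for all $x$ simultaneously.

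We argue by contradiction. Suppose there are $N_k\to\infty$ and $x_k$ with averages at least $\int\phi+\delta$. Define $\nu_k=\frac1{N_k}\sum_{i<N_k}\delta_{f^ix_k}$, viewed as probability measures on the compact space $[0,1]$, and extract a weak-$*$ limit $\nu$. Then $\int\phi\,\dd\nu\ge\int\phi+\delta$ by continuity of $\phi$.

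The main obstacle is to show that $\nu$ is $f$-invariant, despite the discontinuities of $f$. For this we use that $\nu_k$ is almost invariant: $f_*\nu_k-\nu_k$ has total mass at most $2/N_k$. For a continuous test function $\psi$, the function $\psi\circ f$ is continuous except at the finitely many singularities $x_1,\dots,x_{d-1}$ (where it is only left-discontinuous, with right-continuity built in).

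The key point is that $\nu$ gives no mass to any single point. If $\nu(\{p\})=\eta>0$, then, since the orbit segment of $x_k$ consists of distinct points, the set $\{p\}$ can only carry mass if the orbit accumulates at $p$. We handle this by passing to $\Leb$-invariance directly. An IET lifts to a homeomorphism of a compact Cantor-type space $X$, obtained by doubling the forward and backward orbits of the singularities, and invariant measures on $X$ push forward to invariant measures on $[0,1)$. Running the weak-$*$ argument on $X$, where $f$ is continuous, yields an invariant $\tilde\nu$ on $X$ with $\int\tilde\phi\,\dd\tilde\nu\ge\int\phi+\delta$, where $\tilde\phi=\phi\circ\mathrm{proj}$ is continuous on $X$. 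The projection of $\tilde\nu$ is an $f$-invariant probability on $[0,1)$; here we need $\tilde\nu$ to give no mass to the added point at $1$. Unique ergodicity then forces this projection to be $\Leb$, which contradicts $\int\phi\,\dd\Leb<\int\phi+\delta$.

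I expect the only delicate step to be this compactification. If it is undesirable, an alternative is to check invariance of $\nu$ against test functions $\psi$ vanishing near the finitely many singularities. Atoms at the singularities are excluded by the same inequality argument applied to a tiny interval around each singularity: such an interval has small Lebesgue measure, and the limit measure restricted to non-atomic parts must be invariant. Atoms themselves would give a finite invariant orbit, which is impossible for a uniquely ergodic $f$ with Lebesgue as its only invariant measure.
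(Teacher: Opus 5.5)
Your proposal is correct and follows essentially the paper's argument: blow up the orbits of the singularities of $f$ to obtain a homeomorphism $\tilde f$ of a compact space and run the Oxtoby/Krylov--Bogolyubov argument there. The only variation is that you dominate $1_{S_0}$ by a continuous $\phi$ with $\int\phi\,\dd\Leb<3\epsilon/2$ instead of also blowing up the orbits of the endpoints of $S_0$, which is enough since only an upper bound is needed; the one step you leave open is that $\tilde\nu$ charges none of the countably many added points (every doubled point $z^-$, not only the point $1$), which is what makes $\mathrm{proj}_*\tilde\nu$ $f$-invariant, and it holds because atoms of an invariant probability lie on finite $\tilde f$-orbits and such an orbit through an added point would force $f^n=\id$ on a whole interval, contradicting unique ergodicity (your closing fallback, which bounds the $\nu$-mass near singularities by Lebesgue measure, is circular as written and should be dropped).
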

\begin{proof}
If $f$ and $1_{S_0}$ were continuous, then unique ergodicity of $f$ would
ensure uniform convergence of the Birkhoff sums of $1_{S_0}$ to its average,
which is $\le \epsilon$ by Lemma~\ref{lem:small_support}. Therefore, the
conclusion would follow.

Unfortunately, the objects we are considering are not continuous. This is
only a minor annoyance, though: one may blow up the singularities of $f$ and
the endpoints of $S_0$, together with their whole orbits, to recover a
uniquely ergodic homeomorphism of a compact space and a continuous function,
and apply the argument there. See for instance~\cite[Page~55]{yoccoz_iet}.
\end{proof}

Let us fix $N$ as given by the previous lemma. Taking commutators with
suitable conjugates of $h_0$ and iterating this process, we will be able to
get elements with exponentially small support. Let us first describe one
single step, done with a single commutator.

\begin{lem}
\label{lem:iterate} Let $w \in F_2(x,y)$ be a nontrivial word. Then there
exists a nontrivial word $w' \in F_2(x,y)$, with length at most $2 \abs{w} +
4N + 2\abs{w_0}$, such that
\begin{equation*}
  \Leb(\supp w'(f, g)) \leq 18 \epsilon \Leb (\supp w(f, g)).
\end{equation*}
\end{lem}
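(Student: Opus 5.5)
Write $h = w(f,g)$, $S = \supp h$, $h_0 = w_0(f,g)$, $S_0 = \supp h_0$. The plan is to find an integer $0 \le i < N$ and an element $k \in \{1, f, g\}$ such that the commutator $w' = [k^{-1} w k, x^{-i} w_0 x^{i}]$ is a nontrivial word and its evaluation has support of measure at most $18\epsilon \Leb(S)$. Its length is at most $2(\abs{w}+2) + 2(\abs{w_0} + 2i)$. This is bounded by $2\abs{w} + 4N + 2\abs{w_0}$, provided the conjugation by $k$ is absorbed, for instance by taking $i \le N-1$ so that $4i + 4 \le 4N$.

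The basic fact we use is this: if $a, b$ are bijections with supports $A, B$, then $[a,b]$ is the identity off $A \cup B$. More precisely, $\supp [a,b] \subset (A\cap b^{-1}A) \cup \ldots$. Concretely, the support of $[a,b]$ is contained in the set of points $x$ such that one of $x$, $b x$, $a b x$, $\ldots$ lies in $A \cap$ (a $b$-image of) $B$. A cleaner route: if $A \cap B = \emptyset$, then $a$ and $b$ commute. In general, $[a,b] = a^{-1}(b^{-1} a b)$, and $b^{-1}ab$ has support $b^{-1}A$. Off $A \cup b^{-1} A$ this is the identity; on points where the relevant sets avoid $B$ the two pieces cancel. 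I would prove a bound of the form
\[
\Leb(\supp [a,b]) \le 2\Leb(A\cap B) + 2\Leb(A \cap b^{-1}A\ \text{stuff}) ,
\]
and aim for the simplest usable estimate $\Leb(\supp[a,b]) \le 6 \Leb(A \cap B')$. Here $B' = B \cup a(B) \cup a^{-1}(B)$, or a similar union of a few images, whose measure is at most $3\Leb(B)$ locally: any $x$ with $x, ax, a^{-1}x \notin B$ and $x\notin B$ is fixed. Rather than committing now, I will extract whatever constant-factor bound comes out. The target $18 = 6 \cdot 3$ suggests two sources of constants: $6$ from the commutator support estimate and $3$ from the choice of $k$ among three conjugators.

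The choice of $i$ comes from averaging. With $a$ the conjugate of $h$ (support $A = k^{-1}S$, measure $\Leb(S)$) and $b = f^{-i} h_0 f^{i}$ (support $f^{-i}S_0$), the relevant quantity is $\Leb(A \cap f^{-i} S_0)$, possibly with $A$ replaced by a union of at most three translates of $A$, $T$, of measure at most $3\Leb(S)$. Averaging over $i$ and using \eqref{eq:uniform} integrated over $T$ gives
\[
\frac1N \sum_{i<N} \Leb(T \cap f^{-i}S_0) = \int_T \frac1N\sum_{i<N} 1_{S_0}(f^i x)\dd x \le 2\epsilon \Leb(T) \le 6\epsilon \Leb(S).
\]
The uniformity in $x$ is exactly what is needed here, since $T$ is an arbitrary set. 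Hence a good fraction of the $i$ satisfy the bound up to a factor. To handle nontriviality simultaneously, we need the bound for the \emph{same} $i$ while allowing $k$ to vary. So I will average the sum over $k\in\{1,f,g\}$ and $i$ together, or simply pick $i$ minimizing the sum over the three $k$. That produces the factor $3$, giving at most $18\epsilon\Leb(S)$ for each of the three candidates.

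Lemma~\ref{lem:exists_nontrivial_conj}, applied to $u = w$ and $v = x^{-i}w_0x^i$ (both nontrivial), then guarantees that one of $[w,v]$, $[x^{-1}wx, v]$, $[y^{-1}wy,v]$ is nontrivial; we take that one. The main obstacle I expect is getting the commutator support estimate clean enough that the constants come out exactly as $18$. In particular, we must check that the supports of $[a,b]$ are controlled by the intersection of $A$ with a bounded number of images of $B$ (or vice versa) whose Birkhoff averages are still covered by \eqref{eq:uniform}. If the images are $b$-images rather than $f$-images, I will instead bound by $\Leb(A\cap B) + \Leb(a^{-1}(A)\cap B)+\dots$, so that only intersections with $f^{-i}S_0$ itself appear.
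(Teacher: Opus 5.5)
Your architecture is the same as the paper's: the same three candidate words $[k^{-1}wk, x^{-i}w_0x^i]$ with $k\in\{1,x,y\}$, a correct length count, a single $i$ chosen by integrating \eqref{eq:uniform} over $S$, $f^{-1}S$, $g^{-1}S$ and summing, and Lemma~\ref{lem:exists_nontrivial_conj} for nontriviality. The gap is that the commutator support estimate, which is the one step carrying real content, is never established, and the versions you float do not work.

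\begin{itemize}
\item The criterion ``$x\notin B$ and $ax\notin B$ imply $[a,b]x=x$'' only gives $\supp[a,b]\subseteq B\cup a^{-1}B$. That set has measure up to $2\Leb(S_0)\le 2\epsilon$, with no factor $\Leb(S)$, so iterating would produce no decay.
\item Enlarging $A$ to a union $T$ of images fails for another reason. The images that actually occur include $b^{-1}$-images, and $b=f^{-i}h_0f^{i}$ depends on $i$. So you cannot integrate \eqref{eq:uniform} over a fixed $T$ and then average in $i$.
\item Your constant bookkeeping is off. The averaging yields $\Leb(k^{-1}S\cap f^{-i}S_0)\le 6\epsilon\Leb(S)$ for each $k$: a factor $2$ from \eqref{eq:uniform} times $3$ conjugators. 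Even a clean bound $\Leb(\supp[a,b])\le 6\Leb(A\cap B)$ would then give $36\epsilon$, not $18\epsilon$. The correct split is $18=3\cdot 3\cdot 2$.
\end{itemize}

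The missing fact is elementary. For bijections $\phi,\psi$ with $A=\supp\phi\cap\supp\psi$, one has $\supp[\phi,\psi]\subseteq A\cup\phi^{-1}A\cup\psi^{-1}A$. To see this, note first that supports are invariant: $\phi(\supp\phi)=\supp\phi$ by injectivity. Take $x$ outside the right-hand side with $\phi x\ne x$. Then $x\notin\supp\psi$, so $\psi x=x$. Also $\phi x\in\supp\phi\setminus A$, so $\phi x\notin\supp\psi$ and $\psi\phi x=\phi x$. Hence $\phi\psi x=\psi\phi x$. The case $\psi x\ne x$ is symmetric, and the case where both maps fix $x$ is trivial.

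Since interval exchange transformations preserve Lebesgue measure, this gives $\Leb(\supp[\phi,\psi])\le 3\Leb(\supp\phi\cap\supp\psi)$. So only $\Leb(k^{-1}S\cap f^{-i}S_0)$ enters, which is exactly what your averaging controls. Each of the three candidates then has support of measure at most $3\cdot 6\epsilon\Leb(S)=18\epsilon\Leb(S)$. For the main theorem any constant in place of $18$ would suffice, but the lemma as stated needs this factor-$3$ estimate.
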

\begin{proof}
Let us integrate the inequality~\eqref{eq:uniform} over the support $S$ of
$w(f, g)$. We get the inequality
\begin{equation*}
  \frac{1}{N} \sum_{i=0}^{N-1} \Leb(S \cap f^{-i} S_0) \leq 2 \epsilon \Leb(S).
\end{equation*}
We may also apply the same argument to $f^{-1} S$, and to $g^{-1} S$ (which
both have the same measure as $S$). Summing the resulting inequalities, we
get
\begin{equation*}
  \frac{1}{N} \sum_{i=0}^{N-1} (\Leb(S \cap f^{-i} S_0) + \Leb(f^{-1} S \cap f^{-i} S_0) + \Leb(g^{-1} S \cap f^{-i} S_0)) \leq 6 \epsilon \Leb(S).
\end{equation*}
In particular, we can fix $i<N$ such that
\begin{equation}
\label{eq:borne_somme}
  \Leb(S \cap f^{-i} S_0) + \Leb(f^{-1} S \cap f^{-i} S_0) + \Leb(g^{-1} S \cap f^{-i} S_0) \leq 6 \epsilon \Leb(S).
\end{equation}

We note that, for any injective transformations $\phi, \psi$, writing $A =
\supp \phi \cap \supp \psi$, we have
\begin{equation*}
  \supp[\phi, \psi] \subseteq A \cup \phi^{-1} A \cup \psi^{-1} A.
\end{equation*}
Indeed, if $x$ does not belong to the set on the right, one checks readily
that $\phi \psi(x) = \psi\phi(x)$ by separating the cases where $\phi(x) \ne
x$ (in which case $\psi(x) = x$ and $\psi(\phi(x)) = \phi(x)$) and the
symmetric case $\psi(x)\ne x$. In particular, when $\phi$ and $\psi$ are
interval exchange transformations, which preserve Lebesgue measure, we get
\begin{equation*}
  \Leb(\supp [\phi, \psi]) \leq 3 \Leb(\supp \phi \cap \supp \psi).
\end{equation*}
Let us apply this inequality to $\phi = w(f,g)$ (with support $S$) and $\psi
= f^{-i} h_0 f^i$ (with support $f^{-i} S_0$). We get
\begin{equation*}
  \Leb(\supp [w(f,g), f^{-i} h_0 f^i]) \leq 3 \Leb(S \cap f^{-i} S_0) \leq 18\epsilon \Leb(S),
\end{equation*}
thanks to~\eqref{eq:borne_somme}. If the word $w' = [w, x^{-i} w_0 x^i]$ is
nontrivial, this proves the proposition.

If this word is trivial, we may apply the same argument to $\phi = f^{-1}
w(f, g) f$ (with support $f^{-1} S$) and $\psi = f^{-i} h_0 f^i$ (with
support $f^{-i} S_0$). As the measure of $f^{-1} S \cap f^{-i} S_0$ is also
controlled by~\eqref{eq:borne_somme}, we also get the inequality
\begin{equation*}
  \Leb(\supp [f^{-1} w(f,g) f, f^{-i} h_0 f^i]) \leq 18\epsilon \Leb(S).
\end{equation*}
Finally, this inequality is also true for $[g^{-1} w(f,g) g, f^{-i} h_0 f^i]$
by the same argument.

By Lemma~\ref{lem:exists_nontrivial_conj}, one of the words $[w, x^{-i} w_0
x^i]$ and $[x^{-1}wx, x^{-i} w_0 x^i]$ and $[y^{-1}wy, x^{-i} w_0 x^i]$ is
nontrivial. This word satisfies the conclusion of the lemma.
\end{proof}

Starting from $w_0$ and applying Lemma~\ref{lem:iterate} iteratively, we get
a sequence of nontrivial words $w_n$ with
\begin{equation}
\label{eq:abs_wnp1}
  \abs{w_{n+1}} \le 2\abs{w_n} + K
\end{equation}
for $K=4N + 2\abs{w_0}$, and with
\begin{equation}
\label{eq:leb_supp}
  \Leb(\supp w_n(f, g)) \leq \epsilon (18\epsilon)^n.
\end{equation}
The equation~\eqref{eq:abs_wnp1} gives $\abs{w_{n+1}} + K \le 2 (\abs{w_n} +
K)$, and therefore
\begin{equation}
\label{eq:length_wn}
  \abs{w_n} \le 2^n (\abs{w_0} + K).
\end{equation}

To conclude the proof of Theorem~\ref{thm:irred}, it suffices to show that
$h_n \coloneqq w_n(f, g)$ acts as the identity for large enough $n$. Assume
that this is not the case. Then there is a nontrivial interval $I_n$ on which
$h_n$ acts as a nontrivial translation, with singularities at its endpoints.
Let $\ell_n$ be its length. Then $h_n$ is not the identity on $I_n$, so its
support has measure at least $\ell_n$. Together with~\eqref{eq:leb_supp},
this gives the inequality
\begin{equation}
\label{eq:ell_le}
  \ell_n \le \epsilon (18\epsilon)^n.
\end{equation}

The translation vector of $f$ for a point $x$ in the $i$-th interval is given
by
\begin{equation}
\label{eq:translation_vector}
  \sum_{\pi(j) < \pi (i)} \lambda_j - \sum_{j<i} \lambda_j.
\end{equation}
See e.g.~\cite[Page~8]{yoccoz_iet}. A similar expression holds for $f^{-1}$
and $g$ and $g^{-1}$. It follows that all singularities of $h_n$ are of the
form $\sum_{i=1}^{d} a_i \lambda_i + \sum_{j=1}^{e} b_j \mu_j$, as a point
which is a singularity has to be mapped by a sum of such translation vectors
to some singularity of the original maps. The length $\ell_n$, which is the
difference of the endpoints of $I_n$, is also of this form, and it is nonzero
as $I_n$ is nontrivial. Write $\ell_n = \sum_{i=1}^{d} a_i \lambda_i +
\sum_{j=1}^{e} b_j \mu_j$. The diophantine condition for the pair $(f, g)$
ensures that $\ell_n$ is bounded below by $C / \max(\max_i \abs{a_i}, \max_j
\abs{b_j})^c$. The numbers $a_i$ and $b_j$ that show up in this formula are
all bounded linearly in terms of the word length of $\abs{w_n}$, as the
translation vectors for one map involve coefficients bounded by $1$ in
absolute value, thanks to~\eqref{eq:translation_vector}. For some constant
$C'>0$, we get the inequality
\begin{equation*}
  \frac{C'}{\abs{w_n}^c} \leq \ell_n.
\end{equation*}
The quantity $\abs{w_n}$ grows at most like $2^n$ by~\eqref{eq:length_wn}.
Therefore, we get a contradiction with~\eqref{eq:ell_le} for large $n$ if
$18\epsilon<1/2^c$. This can be ensured by taking $\epsilon$ small enough at
the beginning of the argument. This completes the proof of
Theorem~\ref{thm:irred}.

\section{The general case}

\label{sec:red}

In this section, we prove Theorem~\ref{main_thm}. Let us fix $d,e\in \N^*$
and $\pi, \sigma$ two permutations respectively of $\{1,\dotsc, d\}$ and
$\{1, \dotsc, e\}$. Let $(\lambda,\mu) \in \Delta_\pi \times \Delta_\sigma$.

There are finitely many expressions of the form $k + \sum_{i=1}^{d-1} a_i
\lambda_i + \sum_{j=1}^{e-1} b_i \mu_i$ in which the integers $k, a_1,
\dotsc, a_{d-1}, b_1, \dotsc, b_{d-1}$ are all bounded by $4$ in absolute
value. All these quantities are nonzero unless all the integers are $0$,
except in finitely many $(\lambda,\mu)$ affine hyperplanes with zero measure.
Let $O$ be a connected component of the complement of these hyperplanes: this
fixes the relative positions of the singularities and of their first
iterates. It suffices to prove Theorem~\ref{main_thm} for almost every
$(\lambda,\mu) \in O$. We will write $f=f_{\pi,\lambda}$ and $g=f_{\sigma,
\mu}$.

The map $gf$ is an interval exchange transformation on $[0,1)$, with respect
to a permutation $\rho$ which does not depend on the parameters in $O$. Its
singularity set is equal to $S_f \cup f^{-1} S_g$, where $S_f$ and $S_g$
denote the singularity sets of $f$ and $g$ (not including $0$). These
singularities are all distinct, and there are $d-1 + e-1$ of them. Therefore,
$\rho$ is a permutation of $\{1,\dotsc, d+e-1\}$, and the corresponding
parameters for $gf$ live in a simplex of dimension $d+e-2$.

\begin{lem}
\label{lem:ac} The map $L_O : O \subseteq \Delta_\pi \times \Delta_\sigma \to
\Delta_\rho$, associating to $(\lambda, \mu)$ the interval lengths of the
interval exchange transformation $gf$, is an invertible affine map.
\end{lem}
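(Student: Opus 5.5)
The plan is to show first that $L_O$ is affine, then that it is injective, and finally to conclude by a dimension count. Both the domain $O$ and the target $\Delta_\rho$ have dimension $(d-1)+(e-1)=d+e-2$.

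\textbf{Affineness.} On $O$ the combinatorics is frozen: the relative order of the singularities of $f$, of $f^{-1}S_g$, and of their images does not change, so the permutation $\rho$ is constant on $O$. The singularities of $gf$ are the points of $S_f\cup f^{-1}S_g$. A point of $S_f$ is a partial sum $x_k=\sum_{j\le k}\lambda_j$. A point of $f^{-1}S_g$ has the form $y-v$, where $y=\sum_{j\le m}\mu_j$ and $v$ is the translation vector of $f$ on the interval of $f(I)$ containing $y$. By~\eqref{eq:translation_vector}, $v$ is an integer linear combination of the $\lambda_j$, and which combination appears is fixed on $O$. Hence every singularity of $gf$ is an affine function of $(\lambda_1,\dots,\lambda_{d-1},\mu_1,\dots,\mu_{e-1})$, after substituting $\lambda_d=1-\sum_{i<d}\lambda_i$ and $\mu_e=1-\sum_{j<e}\mu_j$. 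The order of these singularities is also fixed on $O$. The interval lengths of $gf$ are the successive differences of the sorted singularities, so they too are affine in the parameters. This gives an affine map $L_O$ into the hyperplane $\{\sum\nu_k=1\}$, with values in $\Delta_\rho$ because all the lengths are positive.

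\textbf{Injectivity.} I would recover $(\lambda,\mu)$ from the lengths $\nu=L_O(\lambda,\mu)$ together with the fixed combinatorics. From $\nu$ one reconstructs the ordered set of singularities of $gf$ as its partial sums. On $O$ we know which of these singularities belong to $S_f$ and which to $f^{-1}S_g$, so the partial sums give $x_1<\dots<x_{d-1}$, and therefore $\lambda$. Knowing $\lambda$ determines $f$. The remaining singularities are the points $f^{-1}(y_m)$, so applying $f$ to them recovers the $y_m$, and hence $\mu$. Since both pieces of data are recovered, $L_O$ is injective.

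\textbf{Conclusion and main obstacle.} An injective affine map between affine spaces of equal dimension $d+e-2$ is an affine isomorphism. Its restriction to $O$ is therefore invertible onto its image, an open subset of $\Delta_\rho$, which is what is needed for absolute continuity later on. The step I expect to be most delicate is the bookkeeping showing that the labelling of singularities and the translation vectors are genuinely constant on $O$. This is exactly what the hyperplanes with coefficients bounded by $4$ were chosen to guarantee: the relevant comparisons are between $x_k$, $y_m$, and $y_m-v$, and these are differences of expressions with small integer coefficients, which are never zero on $O$. Connectedness of $O$ then keeps their signs constant.
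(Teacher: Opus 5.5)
Your proposal is correct and follows the paper's proof: $L_O$ is affine because the points of $S_f\cup f^{-1}S_g$ are integer combinations of the parameters that stay fixed on $O$, and it is invertible because $(\lambda,\mu)$ can be reconstructed from the lengths of $gf$, with $\lambda$ read off from partial sums exactly as in the paper. The differences are minor: you recover $\mu$ by applying the already-determined $f$ to the points of $f^{-1}S_g$, whereas the paper uses $\mu_j=\Leb(f^{-1}J_j)$, a sum of lengths of $gf$; and you make the dimension count explicit, for which it suffices to note that injectivity on the open set $O$ already forces the linear part of $L_O$ to be injective.
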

\begin{proof}
The singularities of $gf$ are located at points of the form $\sum_{i<n}
\lambda_i$ (for singularities of $f$) or $\sum_{j<m} \mu_j -
(\sum_{\pi(i)<\pi(n)} \lambda_i - \sum_{i<n} \lambda_i)$ (for singularities
of $g$ minus a translation vector of $f$). These expressions are affine in
$(\lambda_1,\dotsc, \lambda_{d-1}, \mu_1, \dotsc, \mu_{e-1})$. So are the
differences of consecutive singularities, i.e., the lengths of the intervals
of $gf$. This shows that $L_O$ is affine.

To show that it is invertible, we build an inverse map. As the singularities
of $f$ are also singularities of $gf$, the linearity intervals for $f$ are
unions of linearity intervals of $gf$, therefore we may recover the
$\lambda_i$ as sums of interval lengths of $gf$. In the same way, if $J$ is a
linearity interval of $g$, then $f^{-1} J$ is a union of linearity intervals
of $gf$, hence its measure (which equals the length of $J$) can be recovered
as a sum of lengths of intervals for $gf$.
\end{proof}

Let us say that the $k$-th singularity of $f$ is \emph{splitting} if
$\pi\{1,\dotsc, k\} = \{1,\dotsc, k\}$. This is the obstruction to the
irreducibility of $\pi$. As singularities of $f$ and $g$ are disjoint in $O$,
there can be no common splitting singularity. However, there is still a
mechanism by which the action of the group generated by $f$ and $g$ is not
transitive. Let us say that the $\ell$-th interval $J_\ell$ for $g$ is
\emph{fixed} if both the $\ell-1$-th and $\ell$-th singularities are
splitting, i.e., $\sigma\{1,\dotsc, \ell-1\} = \{1,\dotsc, \ell-1\}$ and
$\sigma(\ell) = \ell$. Then $g$ acts as the identity on $J_\ell$. The
admissibility condition in~\cite{dahmani_fujiwara_guirardel_iet_free,
jay_iet} requires precisely that there is no fixed interval. If a splitting
singularity $x$ of $f$ belongs to a fixed interval for $g$, then the
intervals $[0,x)$ and $[x, 1)$ are invariant under both $f$ and $g$. We
define the set of \emph{pair-splitting singularities} for $(f, g)$ as the set
of splitting singularities of $f$ which belong to a fixed interval for $g$,
or conversely. We also say that $0$ and $1$ are pair-splitting singularities
by convention.

The key point for the reduction to the irreducible case is that, on intervals
between successive pair-splitting singularities, the dynamics is irreducible.
More precisely, the interval exchange transformation $gf$ is associated to an
irreducible permutation there.
\begin{lem}
\label{lem:irred_pair} Let $(\lambda,\mu)\in O$. Let $u < v$ be two
consecutive pair-splitting singularities for $(f, g)$. Then the interval
$P=[u,v)$ is invariant under both $f$ and $g$. Moreover, the restriction to
$P$ of the permutation $\rho$ associated to the interval exchange
transformation $gf$ is irreducible.
\end{lem}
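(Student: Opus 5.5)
We argue in two steps: invariance of $P$, then irreducibility of $\rho$ restricted to $P$.

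\emph{Invariance.} By definition a pair-splitting singularity $x$ (say a splitting singularity of $f$ lying in a fixed interval of $g$) has the property that $[0,x)$ and $[x,1)$ are invariant under $f$, since $\pi$ preserves $\{1,\dotsc,k\}$. They are also invariant under $g$, because $x$ lies in an interval $J_\ell$ on which $g$ is the identity, while $g$ preserves $[0,\text{left end of }J_\ell)$ and $[\text{right end of }J_\ell,1)$. The symmetric case, with the roles of $f$ and $g$ exchanged, is identical. Intersecting $[u,1)$ with $[0,v)$ shows that $P=[u,v)$ is invariant under both maps. Consequently $gf$ restricted to $P$ is an interval exchange, and $\rho$ restricts to a permutation of the intervals of $gf$ contained in $P$.

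\emph{Irreducibility.} Suppose by contradiction that the restriction is reducible. Then there is a singularity $z$ of $gf$ with $u<z<v$ such that $[u,z)$ is invariant under $gf$. We want to show that $[u,z)$ is then invariant under both $f$ and $g$, and then that this forces $z$, or some nearby point, to be pair-splitting, contradicting consecutiveness.

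The key step is passing from invariance under $gf$ to invariance under $f$ and $g$ separately. I would do this by a measure/ordering argument. Since $gf$ preserves $A=[u,z)$, we have $f(A)=g^{-1}(A)$. The set $f(A)$ is obtained from $A$ by cutting at singularities of $f$ and translating the pieces. The set $g^{-1}(A)$ is obtained by cutting at singularities of $g^{-1}$, which are the images of singularities of $g$.

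Work inside $O$, where no bounded integer combination vanishes. In particular, the singularities of $f$, of $g$, and their first images are all distinct, and $z$ is one of them. The boundary points of $f(A)$ are of the form $f(u)$, $f(z^-)$, and images $f(s)$ of $f$-singularities $s$ in $A$. The boundary points of $g^{-1}(A)$ are of the form $g^{-1}(u)$, $g^{-1}(z)$, and images of $g$-singularities. Equality of these two sets in $O$ forces coincidences of such combinations, with coefficients bounded by $4$, unless both sets are genuinely intervals fixed by the maps. This is exactly why $O$ was defined with the bound $4$. Hence $f(A)=A=g^{-1}(A)$.

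So $[u,z)$ is invariant under both $f$ and $g$. Then $z$ is either a splitting singularity of $f$, or it lies in the interior of an $f$-invariant block. Combining invariance under $f$ and under $g$ at the point $z$, which is a singularity of $f$ or of $f^{-1}$ applied to a singularity of $g$: invariance under $g$ means $z$ is not interior to a non-fixed $g$-interval crossing $z$. Therefore $z$, or the $f$-splitting singularity it determines, is a splitting singularity of one map lying in a fixed interval of the other, i.e.\ it is pair-splitting. This contradicts the consecutiveness of $u$ and $v$.

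\emph{Main obstacle.} The hard step is deducing simultaneous invariance under $f$ and $g$ from invariance under $gf$. Here I expect to need a careful case analysis on whether $z\in S_f$ or $z\in f^{-1}S_g$, using the genericity of $O$ to exclude accidental coincidences of endpoints.
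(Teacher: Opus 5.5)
Your invariance argument is fine. Your last step is also fine: once $[u,z)$ is known to be invariant under both $f$ and $g$, one checks directly that $z$ itself is pair-splitting. If $z\in S_f$, it is $f$-splitting; since $z\notin S_g$ in $O$, $g$-invariance of $[0,z)$ forces the $g$-interval containing $z$ to have zero translation and splitting endpoints, i.e.\ to be fixed. The case $z\in f^{-1}S_g$ is symmetric.

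The genuine gap is the step in between: passing from $gf$-invariance of $A=[u,z)$ to invariance under $f$ and $g$ separately. This is the whole content of the lemma, and you only assert it. Genericity in $O$ does not forbid coincidences between the boundary points of $f(A)$ and of $g^{-1}(A)$. It only says that any such coincidence is a formal identity, valid identically in the parameters. Formal identities do occur; this is exactly how splitting singularities and fixed intervals show up (e.g.\ $f(s)=s$ for $s$ in a fixed interval of $f$). So ``equality forces coincidences, hence $f(A)=A=g^{-1}(A)$'' is not a deduction. You never say which coincidence is forced, nor how the vanishing of its coefficients translates into a combinatorial statement about $\pi$ and $\sigma$. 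Your list of boundary points is also inaccurate: it omits the left limits $f(s^-)$, and the interior boundary points of $g^{-1}(A)$ are the singularities of $g$ themselves, not their images. Matching the two endpoint sets would in any case involve many possible pairings, each needing its own analysis.

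The missing idea, as in the paper, is to reduce reducibility to a \emph{single} relation. If $\rho_P$ preserves its first $m$ intervals, the right endpoint $z$ of the $m$-th one is the left endpoint of another image interval. Hence $z=gf(x)$ with $x$ and $z$ both singularities of $gf$ strictly inside $P$. Writing each of $x$ and $gfx$ as an element of $S_f$ or of $f^{-1}S_g$ gives one equation with small integer coefficients, which must then hold identically on $O$. Lemma~\ref{lem:perm} converts identities such as $\{j \st \sigma(j)<\sigma(\ell)\}=\{j \st j<\ell\}$ into ``$\sigma(\ell)=\ell$ and $\sigma$ preserves $\{1,\dotsc,\ell-1\}$'', i.e.\ into fixed intervals and splitting singularities.

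Even then a four-case analysis is needed, with non-obvious subcases. When $x\in S_f$ and $gfx\in f^{-1}S_g$, one must first dispose of the possibility $fgfx=0$. One then uses the coefficients of $\mu_1$ and of $\lambda_{\pi^{-1}(1)}$ to pin down the relevant intervals before Lemma~\ref{lem:perm} shows that $gfx$ is pair-splitting. Without an argument of this kind, your proposal establishes invariance of $P$ but not irreducibility.
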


The proof of this lemma is mainly a combinatorics exercise. We record the
following easy fact on permutations that we will use several times.
\begin{lem}
\label{lem:perm} Let $\rho$ be a permutation of $\{1, \dotsc, d\}$ for some
$d$, and let $a,b\in \{1,\dotsc, d\}$. Assume that $\{i \st \rho(i) < \rho
(a)\} = \{i \st i < b\}$. Then $\rho(a) = b$ and $\rho\{1, \dotsc, b-1\} =
\{1, \dotsc, b-1\}$.
\end{lem}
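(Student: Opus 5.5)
Since $\rho$ is a bijection of $\{1,\dotsc,d\}$, the set $\{i \st \rho(i) < \rho(a)\}$ always has exactly $\rho(a)-1$ elements: these are the preimages of the values $1, \dotsc, \rho(a)-1$. The hypothesis says this set is $\{1,\dotsc,b-1\}$, which has $b-1$ elements. Comparing cardinalities gives $\rho(a)-1 = b-1$, that is, $\rho(a) = b$.

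For the second assertion, apply $\rho$ to both sides of the hypothesis. The left side becomes $\rho\{i \st \rho(i) < \rho(a)\} = \{j \st j < \rho(a)\} = \{1,\dotsc,\rho(a)-1\}$, again because $\rho$ is a bijection. Using $\rho(a)=b$, this reads $\rho\{1,\dotsc,b-1\} = \{1,\dotsc,b-1\}$, which is the claim.

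No step here is a real obstacle. The only point to watch is the degenerate case $b=1$: there both sets are empty and both conclusions hold trivially, with $\rho(a)=1$ and the statement about $\{1,\dotsc,0\}=\emptyset$ vacuous. I expect the substance to be in how this lemma is used later, in the proof of Lemma~\ref{lem:irred_pair}. There, I would read the hypothesis $\{i \st \rho(i) < \rho(a)\} = \{i \st i< b\}$ as follows: the image of the interval with index $a$ starts exactly at the point where the $b$-th interval starts, so translation vectors detect splitting. That application is not part of the present statement.
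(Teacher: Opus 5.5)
Your proof is correct and follows the paper's route: $\rho(a)=b$ by comparing cardinalities, then the second claim from the hypothesis. Applying the bijection $\rho$ to both sides is the same step as the paper's rereading of the hypothesis as $\rho(i)<b \Leftrightarrow i<b$.
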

\begin{proof}
The equality $\rho(a) = b$ follows by taking the cardinalities in the
equality $\{i \st \rho(i) < \rho (a)\} = \{i \st i < b\}$. This equality then
reads $\rho(i)< b \Leftrightarrow i < b$, which gives $\rho\{1, \dotsc, b-1\}
= \{1, \dotsc, b-1\}$.
\end{proof}

\begin{proof}[Proof of Lemma~\ref{lem:irred_pair}]
The invariance under both $f$ and $g$ is obvious from the definition. The
content of the lemma is the irreducibility of the restriction of $\rho$ to
$P$.

To simplify notations, we will assume that $u=0$ and $v<1$. The other cases
$P=[u,1)$ with $u>0$, and $P=[0,1)$, and $P=[u,v)$ with $u>0$ and $v<1$, are
analogous.

Let $S_f$ be the set of singularities of $f$ in $P$, and $S_g$ those of $g$.
The singularities of $gf$ in $P$ are $S \coloneqq S_f \cup f^{-1} S_g$.

Let us show that the restriction to $P$ of the permutation $\rho$ associated
to $gf$ is irreducible. Otherwise, considering $m>0$ such that
$\rho\{1,\dotsc, m\} = \{1,\dotsc, m\}$, the right end-point of the
$m$\nobreakdash-th interval is positive and it is the image under $gf$ of a
singularity $x\in P$. This entails a nontrivial integer relation between
$(\lambda_i)$ and $(\mu_j)$, which is not possible in $O$. Let us give more
details, separating the 4 cases $x \in S_f$ or $f^{-1} S_g$ and $gfx \in S_f$
or $f^{-1} S_g$.

\emph{Case 1: Assume first $x \in S_f$}. Denoting by $k$ the index of the
singularity, then $x = \sum_{i<k+1} \lambda_i$. Then $f x = \sum_{\pi(i) <
\pi(k+1)} \lambda_i$. It belongs to some linearity interval for $g$, say with
index $\ell$. Then
\begin{equation}
\label{eq:gfx}
  g f x = \sum_{\pi(i) < \pi(k+1)} \lambda_i + \sum_{\sigma(j) < \sigma(\ell)} \mu_j - \sum_{j<\ell} \mu_j.
\end{equation}

\emph{Case 1a: $gfx \in S_f$}. Say it is the $n$-th singularity. We get
\begin{equation*}
  \sum_{\pi(i) < \pi(k+1)} \lambda_i + \sum_{\sigma(j) < \sigma(\ell)} \mu_j - \sum_{j<\ell} \mu_j = \sum_{i<n+1} \lambda_i.
\end{equation*}
As $(\lambda,\mu) \in O$, this relation is only possible if each $\lambda_i$
and $\mu_j$ featuring in the formula has a coefficient $0$. In particular,
$\{j \st \sigma(j)<\sigma(\ell)\} = \{j \st j < \ell\}$. By
Lemma~\ref{lem:perm}, this implies $\sigma(\ell) = \ell$ and $\sigma
\{1,\dotsc, \ell-1\} = \{1,\dotsc, \ell-1\}$. In the same way, $\{i \st
\pi(i) < \pi(k+1)\} = \{i \st i < n+1\}$, yielding $\pi(k+1)=n+1$ and
$\pi\{1,\dotsc, n\} = \{1,\dotsc, n\}$. The point $f x = \sum_{i<n+1}
\lambda_i$ is therefore a splitting singularity of $f$ which belongs to a
fixed interval for $g$. This contradicts the fact that there is no
pair-splitting singularity in $P$.

\emph{Case 1b: $gfx \in f^{-1} S_g$}. Applying $f$, via a translation vector
$\sum_{\pi(i)< \pi(m)} \lambda_i - \sum_{i<m} \lambda_i$, sends $gfx$ to a
singularity of $g$ of the form $\sum_{j<n} \mu_j$. If $f(gfx)=0$, then $gfx$
is already a singularity of $f$, and this case is handled above. Therefore,
we can assume $fgfx \ne 0$, i.e., $n>1$. With~\eqref{eq:gfx}, we get the
equality
\begin{equation*}
  \sum_{\pi(i) < \pi(k+1)} \lambda_i + \sum_{\sigma(j) < \sigma(\ell)} \mu_j - \sum_{j<\ell} \mu_j
  + \sum_{\pi(i)< \pi(m)} \lambda_i - \sum_{i<m} \lambda_i = \sum_{j<n} \mu_j.
\end{equation*}
All $\lambda_i$ and $\mu_j$ in this formula should have a coefficient $0$. If
$\ell>1$, we get a contradiction by considering the coefficient of $\mu_1$,
as $n>1$. Therefore, $\ell=1$. We get $\sum_{\sigma(j) < \sigma(\ell)} \mu_j
= \sum_{j<n} \mu_j$. Hence, $\{j \st \sigma(j) < \sigma(\ell)\} = \{j \st j <
n\}$. By Lemma~\ref{lem:perm}, this gives $\sigma(\ell)=n$, and moreover
$\sigma\{1, \dotsc, n-1\} = \{1, \dotsc, n-1\}$, i.e., $f gf x$ is a
splitting singularity for $g$. In the same way, if $\pi(k+1)>1$ and
$\pi(m)>1$, then $\lambda_{\pi^{-1}(1)}$ appears with a strictly positive
coefficient in the above equation, a contradiction. Therefore, $\pi(k+1)=1$
or $\pi(m)=1$.

In the first case $\pi(k+1)=1$, then $\{i \st \pi(i)<\pi(m)\} = \{i \st i <
m\}$, which gives $\pi\{1,\dotsc, m-1\} = \{1,\dotsc, m-1\}$ and $\pi(m)=m$
by Lemma~\ref{lem:perm}. This shows that $gfx$ is in a fixed interval for
$f$. Moreover, $gfx=fgfx$ is a splitting singularity of $g$. In particular,
$gfx$ is a pair-splitting singularity for $(f, g)$, a contradiction.

In the second case $\pi(m)=1$, then $\{i\st \pi(i) < \pi(k+1)\} = \{i \st i <
m\}$. Therefore $\pi(k+1)=m$ and $\pi\{1,\dotsc, m-1\} = \{1,\dotsc, m-1\}$
by Lemma~\ref{lem:perm}. If $m>1$, we get that the preimage of $1$ under
$\pi$ is $<m$, which contradicts the equality $\pi(m)=1$. Therefore,
$m=1=\pi(m)$. As $\pi\{1,\dotsc, m-1\} = \{1,\dotsc, m-1\}$, we conclude as
above that $gfx$ is in a fixed interval for $f$, and therefore a
pair-splitting singularity for $(f,g)$.

\medskip

\emph{Case 2: Assume now $x \in f^{-1} S_g$}. Then $fx$ is a singularity of
$g$, and then its image $gfx$ has the form $\sum_{\sigma(j)<\sigma(\ell+1)}
\mu_j$.

\emph{Case 2a: $gfx \in S_f$}. For some $n$, we get
$\sum_{\sigma(j)<\sigma(\ell+1)} \mu_j = \sum_{i<n} \lambda_i$. Since we are
in $O$, this implies that $n=\sigma(\ell+1)=1$, and $gfx=0$. This contradicts
the assumption that $gfx$ is a non-zero singularity.

\emph{Case 2b: $gfx \in f^{-1}S_g$}. Then $fg$ maps the singularity $fx\in
S_g$ to $fgfx \in S_g$. We have already shown that this can not happen in
Case 1a, exchanging the roles of $f$ and $g$.
\end{proof}

We note that the pattern of pair-splitting singularities is constant over
$O$, as well as the permutation $\rho$ associated to the interval exchange
transformation $gf$, by definition of $O$. Denote by $P_1, \dotsc, P_p$ the
successive intervals between pair-splitting singularities. Their number is
constant, but the position of the endpoints depends on $(\lambda,\mu)$.

\begin{lem}
\label{lem:nontrivial_comp} Fix $P=P_q$ for some $q\le p$. For almost every
$(\lambda,\mu)\in O$, there exists a nontrivial word $w_P \in F_2(x,y)$
(depending on $\lambda,\mu$) such that $w_P(f, g) = \id$ on $P$.
\end{lem}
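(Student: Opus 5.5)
The plan is to apply Theorem~\ref{thm:irred} to the pair $(F, G) \coloneqq (gf|_P, g|_P)$, after rescaling $P$ affinely to $[0,1)$. A relation $w'(F,G)=\id$ with $w'$ nontrivial then gives $w_P(x,y) \coloneqq w'(yx, y)$, which satisfies $w_P(f,g)=\id$ on $P$. Moreover $w_P$ is nontrivial, since $x\mapsto yx$, $y\mapsto y$ is an automorphism of $F_2(x,y)$. By Lemma~\ref{lem:irred_pair}, $P$ is invariant under $f$ and $g$, and the restriction $\rho_P$ of $\rho$ to $P$ is irreducible. Rescaling by the length $\abs{P}$, $F$ becomes $f_{\rho_P,\nu}$, where $\nu$ collects the lengths of the intervals of $gf$ inside $P$, divided by $\abs{P}$. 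Likewise $G$ becomes $f_{\sigma_P, \mu'}$, where $\sigma_P$ is the restriction of $\sigma$ to the intervals of $g$ inside $P$; this uses that the endpoints of $P$ are singularities of $g$ or lie in fixed intervals of $g$. If $\rho_P$ has only one interval, then $F=\id$ on $P$ and $w'=x$ already works. So we may assume $\rho_P$ has at least $2$ intervals.

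Next we check the three hypotheses for almost every $(\lambda,\mu)\in O$. By Lemma~\ref{lem:ac}, $L_O$ is an invertible affine map, so the push-forward of Lebesgue measure on $O$ is absolutely continuous on $\Delta_\rho$. The coordinates of $L_O(\lambda,\mu)$ corresponding to $P$, after normalization by their sum, therefore have an absolutely continuous distribution on $\Delta_{\rho_P}$. This holds because the projection followed by normalization is a submersion, and the image of an absolutely continuous measure under a submersion is absolutely continuous. Since $\rho_P$ is irreducible, Veech's theorem and Proposition~\ref{prop:rigid_transl} show that a.e.\ $\nu$ gives a uniquely ergodic IET that is rigid by translations. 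Both properties are invariant under affine rescaling. Pulling back the null exceptional sets, they hold for a.e.\ $(\lambda,\mu)\in O$.

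The main obstacle is the diophantine condition, because $\nu$ and $\mu'$ are not independent parameters. The way around it is to prove the condition directly from a diophantine property of $(\lambda,\mu)$ and to avoid Proposition~\ref{prop:dioph}. Each $\nu_i$ and each $\mu'_j$ is $\abs{P}^{-1}$ times an integer linear combination of $\lambda_1,\dotsc,\lambda_d,\mu_1,\dotsc,\mu_e$ with bounded coefficients (coefficients in $\{-2,\dotsc,2\}$, say, by Lemma~\ref{lem:ac}). Hence
\[
\abs{P}\Bigl(\sum a_i\nu_i+\sum b_j\mu'_j\Bigr)=\sum A_i\lambda_i+\sum B_j\mu_j, \qquad \max\abs{A_i},\max\abs{B_j}\le C_0\max(\max\abs{a_i},\max\abs{b_j}).
\]
Here $C_0$ depends only on $O$ and $P$. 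By Proposition~\ref{prop:dioph}, a.e.\ $(\lambda,\mu)$ is diophantine. Whenever the left side is nonzero, this gives the lower bound $\abs{\sum a_i\nu_i+\sum b_j\mu'_j}\geq \abs{P}^{-1}C C_0^{-c}/\max(\dots)^c$, which is the required diophantine condition for the pair $(F,G)$. Intersecting the three full-measure sets and applying Theorem~\ref{thm:irred} to $(F,G)$ gives $w'$, hence $w_P$.
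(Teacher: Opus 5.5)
Your proposal is correct and follows essentially the same route as the paper. You apply Theorem~\ref{thm:irred} to the rescaled pair $(gf|_P, g|_P)$ and obtain unique ergodicity and rigidity from absolute continuity via Lemma~\ref{lem:ac}; you then deduce the diophantine condition for the new pair from that of $(f,g)$, given by Proposition~\ref{prop:dioph}, by clearing the factor $\abs{P}$ and bounding the integer coefficients linearly. You also point out explicitly that $x\mapsto yx$, $y\mapsto y$ is an automorphism of $F_2(x,y)$, so $w_P=w'(yx,y)$ is nontrivial; the paper leaves this detail implicit.
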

\begin{proof}
Again, to simplify the notations, we will assume that $P=[0, u)$ with $u<1$.
The restriction of $\rho$ to $P$, that we denote by $\rho_P$, is a
permutation of $\{1,\dotsc, r\}$ for some $r$. It is irreducible by
Lemma~\ref{lem:irred_pair}. If $r=1$, then $gf=\id$ on $P$ and the conclusion
is trivial, taking $w_P=yx$. Assume now $r\ge 2$.

Let $\Phi : [0,u) \to [0,1)$ be the canonical rescaling map $\Phi(x) = x /
u$. Then $\tilde f \coloneqq \Phi \circ f_{|P} \circ \Phi^{-1}$ and $\tilde g
\coloneqq \Phi \circ g_{|P} \circ \Phi^{-1}$ are two interval exchange
transformations on $[0,1)$. To conclude, it suffices to show that they
satisfy a non-trivial relation. For this, we will apply
Theorem~\ref{thm:irred} to the pair $(\tilde g \tilde f, \tilde g)$. We have
to show that, for almost every pair $(\lambda, \mu)\in O$, then $\tilde
g\tilde f$ is uniquely ergodic and rigid by translations, and the pair
$(\tilde g\tilde f, \tilde g)$ is diophantine.

Denote by $(\nu_1, \dotsc, \nu_r)$ the lengths of the linearity intervals of
$gf$ in $P$. By Lemma~\ref{lem:ac}, their distribution is absolutely
continuous with respect to Lebesgue measure when $(\lambda,\mu)$ are
distributed according to Lebesgue measure in $O \subseteq \Delta_\pi \times
\Delta_\sigma$. The map
\begin{equation*}
  (\nu_1,\dotsc, \nu_r) \mapsto \pare*{\nu_1 / \pare*{\sum_{i=1}^r \nu_i}, \dotsc, \nu_r / \pare*{\sum_{i=1}^m \nu_i}},
\end{equation*}
taking values in the simplex $\Delta_{\rho_P}$, is smooth with a derivative
of maximal rank. Therefore, it maps Lebesgue measure to a measure absolutely
continuous with respect to Lebesgue measure. It follows that the interval
exchange transformation $\tilde g\tilde f$ is Lebesgue-generic. As $\rho_P$
is irreducible, we get from~\cite[Theorem~1.2]{veech_metric_1} that $\tilde g
\tilde f$ is almost surely uniquely ergodic, and from
Proposition~\ref{prop:rigid_transl} that it is almost surely rigid by
translations.

For almost every $(\lambda,\mu)$, the pair $(f, g)$ is diophantine, by
Proposition~\ref{prop:dioph}, for some constants $C$ and $c$. We will show
that in this case the pair $(\tilde g \tilde f, \tilde g)$ is diophantine
(for another constant $C'$, but the same $c$), which will conclude the proof.

Let $\mu'_1, \dotsc, \mu'_n$ be the lengths of the intervals of $g$ in $P$.
Note that they do not have to be a subset of the $\mu_i$, as some endpoints
of $P$ might be splitting singularities of $f$ and not of $g$. However, each
of them is a linear combination of the $\lambda_i$ and $\mu_j$.

We have to show that, for any integers $a_1,\dotsc, a_r, b_1,\dotsc, b_n$,
\begin{equation*}
  \abs*{\sum_{i=1}^{r} a_i \frac{\nu_i}{u} + \sum_{j=1}^{n} b_j \frac{\mu'_j}{u}}
  \geq C' / \max(\max_i \abs{a_i}, \max_j \abs{b_j})^c
\end{equation*}
whenever the expression on the left is nonzero. Multiply everything by $u$,
and note that the $\nu_i$ and the $\mu'_j$ are all integer linear
combinations of $(\lambda_1, \dotsc, \lambda_d, \mu_1, \dotsc, \mu_e)$. The
inequality to be shown becomes
\begin{equation*}
  \abs*{\sum_{i=1}^{d} A_i \lambda_i + \sum_{j=1}^{e} B_j \mu_j}
  \geq C' u / \max(\max_i \abs{a_i}, \max_j \abs{b_j})^c.
\end{equation*}
for some integers $A_i, B_j$ which are fixed linear combinations of $a_i$ and
$b_j$. The diophantine condition for $(f,g)$ implies that the left hand term
is bounded below by
\begin{equation*}
  C / \max(\max_i \abs{A_i}, \max_j \abs{B_j})^c.
\end{equation*}
As $\abs{A_i}$ and $\abs{B_j}$ are bounded by a constant multiple of
$\max(\max_i \abs{a_i}, \max_j \abs{b_j})$, the result follows.
\end{proof}

We can now conclude the proof of the main theorem. For almost every
$(\lambda,\mu) \in O$, we prove by induction that there is a nontrivial word
$w'_i$ such that $w'_i(f, g)=\id$ on $P_1 \cup \dotsb \cup P_i$, starting
from $w'_1=w_1$. Assuming that $w'_i$ is constructed, consider a nontrivial
word $w_{i+1}$ such that $w_{i+1}(f,g)=\id$ on $P_{i+1}$, as given by
Lemma~\ref{lem:nontrivial_comp}. Then $[w'_i(f, g), w_{i+1}(f, g)]=\id$ both
on $P_1 \cup \dotsb \cup P_i$ and $P_{i+1}$, as desired. The only issue is
that the word $[w'_i, w_{i+1}]$ could be trivial. However, the same argument
applies also to $[xw'_i x^{-1}, w_{i+1}]$ and to $[yw'_i y^{-1}, w_{i+1}]$ as
all the $P_j$ are invariant under $f$ and $g$. One of these words is
nontrivial by Lemma~\ref{lem:exists_nontrivial_conj}.

The word $w'_p$ constructed at the end of the induction satisfies the
conclusion of Theorem~\ref{main_thm}.

\bibliography{biblio}
\bibliographystyle{amsalpha}
\end{document}